\theoremstyle{plain}
\newcommand{\D}{\mathbb{D}}
\newcommand{\R}{\mathbb{R}}
\newcommand{\h}{\mathbb{H}}
\newcommand{\N}{\mathbb{N}}
\newcommand{\T}{\mathbb{T}}
\newcommand{\C}{\mathcal{C}}
\newcommand{\VK}{Varopoulos--Kaijser }
\begin{document}
\title[On a Question of N. Th. Varopoulos
and the constant $C_2(n)$]{On a Question of N. Th. Varopoulos \\
and the constant $C_2(n)$}

\keywords{Grothendieck Inequality, von Neumann Inequality, Varopoulos Operator, Grothendieck Constant, Positive Grothedieck Constant}

\thanks{The first named author is supported by National Postdoctoral Fellowship (Ref. No. PDF/2016/000094), SERB, DST, Government of India.}

\thanks{The second named author is supported by Council for Scientific and Industrial Research, MHRD, Government of India.}

\author{\firstname{Rajeev} \lastname{Gupta}}
\address{Department of Mathematics and Statistics,\\ Indian Institute of Technology, Kanpur-208016}
\email{rajeevg@iitk.ac.in}

\author{\firstname{Samya} \middlename{K.} \lastname{Ray}}
\address{Department of Mathematics and Statistics,\\ Indian Institute of Technology, Kanpur-208016}
\email{samya@iitk.ac.in}

\pagestyle{headings}

\begin{abstract}
Let $\mathbb C_k[Z_1,\ldots, Z_n]$ denote 
the set of all polynomials of degree 
at most $k$ in $n$ complex variables 
and $\mathscr{C}_n$ denote the set of all 
$n$ - tuple $\boldsymbol T=(T_1,\ldots,T_n)$ of commuting contractions 
on some Hilbert space $\mathbb{H}.$ The interesting inequality
$$K_{G}^{\mathbb C}\leq \lim_{n\to \infty}C_2(n)\leq  2 K^\mathbb C_G,$$ 
where 
\[C_k(n)=\sup\big\{\|p(\boldsymbol T)\|:\|p\|_{\D^n,\infty}\leq 1, p\in \mathbb C_k[Z_1,\ldots,Z_n],\boldsymbol T\in\mathscr{C}_n \big\}\]
and $K_{G}^{\mathbb C}$ is the complex Grothendieck constant, 
is due to Varopoulos. 
We answer a long--standing question by showing
that the limit $\lim_{n\to\infty} \frac{C_2(n)}{K^\mathbb C_G}$ is strictly bigger than $1.$
Let $\mathbb C_2^s[Z_1,\ldots , Z_n]$ denote the set of all complex valued homogeneous polynomials $p(z_1,\ldots,z_n)$ $=\sum_{j,k=1}^{n}a_{jk}z_jz_k$
of degree two in $n$ - variables, where $(\!(a_{jk})\!)$ is a $n\times n$ complex symmetric matrix. 
For each $n\in\mathbb{N},$ define the linear map $\mathscr{A}_n:\big (\mathbb C_2^s[Z_1,\ldots , Z_n],\|\cdot\|_{\mathbb D^n, \infty}\big ) \to \big (M_n, \|\cdot \|_{\infty \to 1}\big )$ to be $\mathscr{A}_n\big (p) = (\!(a_{jk})\!).$ 
We show that the supremum (over $n$) of the norm of the operators 
$\mathscr{A}_n;\,n\in\mathbb{N},$ is bounded below by  the constant $\pi^2/8.$ 
Using a class of operators, first introduced by Varopoulos, 
we also construct a large class of explicit polynomials 
for which the von Neumann inequality fails. 
We prove that the original Varopoulos--Kaijser polynomial 
is extremal among a, suitably chosen, large class of homogeneous 
polynomials of degree two. We also study the behaviour of the 
constant $C_k(n)$ as $n \to \infty.$
\end{abstract}
\begin{altabstract}
Soit $\mathbb C_k[Z_1,\ldots, Z_n]$
l'ensemble de tous les polyn\^{o}mes de degr\'e 
au plus $k$ dans $n$ variables complexes
et $\mathscr {C}_n$ indiquent l'ensemble de tous
$n$ - tuple $\boldsymbol T=(T_1,\ldots,T_n)$ de contractions de navettage
sur un espace de Hilbert $\mathbb{H}.$ L'in\'{e}galit\'{e} int\'{e}ressante
$$K_{G}^{\mathbb C} \leq \lim_ {n \to \infty} C_2(n)\leq 2 K^\mathbb C_G, $$
o\`{u}
\[C_k(n)=\sup\big\{\|p(\boldsymbol T)\|:\|p\|_{\D^n,\infty}\leq 1, p\in \mathbb C_k[Z_1,\ldots,Z_n],\boldsymbol T\in\mathscr{C}_n \big\}\]
et $ K_{G}^{\mathbb C}$ est la constante complexe de Grothendieck,
est due \`{a} Varopoulos.
Nous r\'{e}pondons \`{a} une question de longue date en montrant
que $\lim_{n\to\infty} \frac{C_2(n)}{K^\mathbb C_G}$ est strictement plus grand que $1$.
Soit $\mathbb C_2^s [Z_1, \ldots, Z_n]$ l'ensemble des polyn\^{o}mes homog\`{e}nes complexes $ p (z_1, \ldots, z_n) =~~~~$ $\sum_{j, k = 1}^{n} a_{jk} z_jz_k $
de degr\'{e} deux dans $n$ - variables, o\'{u} $(\!(a_{jk})\!)$ est une matrice sym\'{e}trique complexe $n\times n$.
Pour chaque $ n \in \mathbb {N}, $ d\'{e}finit la carte lin\'{e}aire 
$\mathscr{A}_n:\big (\mathbb C_2^s[Z_1,\ldots , Z_n],
\|\cdot\|_{\mathbb D^n, \infty}\big ) \to \big (M_n, 
\|\cdot \|_{\infty \to 1}\big )$ \'{a} 
$\mathscr{A}_n\big (p) = (\!(a_{jk})\!).$ 
Nous montrons que le supremum (plus de $n$) de la norme des op\'{e}rateurs $\mathscr {A}_n; \, n \ \in \mathbb{N},$ est limit\'{e} ci-dessous par la constante $\pi^2/8.$
En utilisant une classe d'op\'{e}rateurs, introduit par Varopoulos, nous construisons aussi une grande classe de polyn\^{o}mes explicites pour lequel l'in\'{e}galit\'{e} de von Neumann \'{e}choue.
Nous prouvons que le polyn\^{o}me original Varopoulos-Kaijser est extr\'{e}male parmi une grande classe d'homog\`{e}ne convenablement choisie polyn\^{o}mes de degr\'{e} deux. Nous \'{e}tudions \'{e}galement le comportement de constante $C_k(n)$ as $n \to \infty.$
\end{altabstract}

\maketitle

\section{Introduction} 
Let $\mathbb C[Z_1,\ldots,Z_n]$ denote 
the set of all complex valued polynomials in $n$ - variables.
For any continuous function $f:X\to \mathbb C,$ 
on a compact set $X,$ we let 
$\|f\|_{X,\infty}$ denote its supremum norm, namely,   
\[\|f\|_{X,\infty}=\sup\{|f(x)|:x\in X\}.\]  
Let $\mathbb{D}^n$ denote the unit polydisc in $\mathbb{C}^n.$ 
The von Neumann inequality \cite{vN} states that 
$\|p(T)\|\leq \|p\|_{\D,\infty}$ 
for all $p\in\mathbb{C}[Z]$ and for any contraction $T$ on a complex Hilbert space. 
For any pair of commuting contractions $T_1,T_2$, a generalization of the von Neumann inequality:
$\|p(T_1,T_2)\|\leq \|p\|_{\D^2,\infty},$ $p\in\mathbb{C}[Z_1,Z_2]$ 
follows from a deep theorem of Ando \cite{ando} on unitary dilation of a pair of commuting contractions. For $n\in\N,$ let 
$\mathscr{C}_n$ denote the set of all $n$ - tuple $\boldsymbol T=(T_1,\ldots,T_n)$ of commuting contractions 
on some Hilbert space $\mathbb{H}.$ In the paper \cite{V1}, Varopoulos showed 
that the von Neumann inequality 
fails for $\boldsymbol T$ in $\mathscr{C}_n,$ $n > 2.$ 
He along with Kaijser \cite{V1} and 
simultaneously Crabb and Davie \cite{CD} 
produced an explicit example of 
three commuting contractions $T_1,T_2,T_3$ 
and a polynomial $p$ for which 
$\|p(T_1,T_2,T_3)\| > \|p\|_{\D^3,\infty}.$ 
For a fixed $k\in\mathbb{N},$ define  (see \cite{V2} and \cite[page 24]{Pisier}):
\[C_k(n)=\sup\big\{\|p(\boldsymbol T)\|:\|p\|_{\D^n,\infty}\leq 1, p\in \mathbb C_k[Z_1,\ldots,Z_n],\,\boldsymbol T \in\mathscr C_n \big\}\]
and let $C(n)$ denote $\lim_{k\to \infty} C_k(n).$

Since the counterexample to the von Neumann inequality in three variables, 
due to Varopoulos and Kaijser \cite{V1}, 
involves a (explicit) homogeneous polynomial of degree two therefore 
$C_2(3) >1.$ From the von Neumann inequality and its generalization to two variables, it follows that  $C(1)=C(2)=1.$   
In the paper \cite{V2}, Varopoulos shows that 
\begin{equation}\label{bound for C_2}
K_G^\mathbb C \leq \lim_{n\to \infty} C_2(n) \leq 2 K_G^\mathbb C,
\end{equation}
where $K_G^\mathbb C$ is the complex Grothendieck constant defined below. 
\begin{defi}[Grothendieck Constant]
	For a complex $($real$)$ array $A:=\big (\!\!\big ( a_{j k} \big ) \!\!\big )_{n\times n},$ define the following norm 
	\begin{equation}\label{hypothesis}
		\|A\|_{\infty \to 1} := \sup\big\{|\langle A v , w\rangle| :\|v\|_{\ell^\infty(n)}\leq 1, \|w\|_{\ell^\infty(n)} \leq 1\big\},
	\end{equation}
	where $v$ and $w$ are vectors in $\mathbb C^n$ $($resp. $\mathbb R^n)$. 
	There exists a finite constant $K>0$  
	such that for any choice of unit vectors 
	$(x_j)_1^n$ and $(y_k)_1^n$ in a complex $($resp. real$)$ Hilbert space $\mathbb{H}$, 
	we have
	\begin{equation}\label{conclusion}
		\Big |\sum_{j,k=1}^na_{jk}\langle x_j,y_k \rangle\Big |   
		\leq K \|A\|_{\infty \to 1}  
	\end{equation}
	for all $n\in\mathbb{N}$ and $A= \big (\!\!\big ( a_{j k} \big ) \!\!\big ).$
	The least such constant is denoted by $K_G$ 
	and is known as the Grothendieck constant. 
	Note that the definition of $K_G$ depends  on the underlying field. 
	When it is the field of complex numbers $\mathbb C$ $(\mbox{\rm resp. }\mathbb{R})$,   
	this constant is known as the 
	complex (resp. real) Grothendieck constant and is denoted by $K_G^\mathbb C$ $($resp. $(K_G^\mathbb{R}))$. 
It is known that $1.338 < K_G^\mathbb C\leq 1.4049,$ and $1.66\leq K_G^\mathbb{R}\leq \frac{\pi}{2log(1+\sqrt{2})}$, see \cite[\S {4}]{GrPis}. 
Recently, it has been proved in \cite{Naor} that this upper bound of $K_G^\mathbb{R}$ is strict which settles a long--standing conjecture of Krivine \cite{Kr1,Kr2}.  
We refer the reader to \cite{BBGM} for some explicit computations of  this constant for small values of $n.$ 
For more on Grothedieck constant, we refer the reader to \cite{GrPis}.	
\end{defi}

Since it is known that $K_{G}^{\mathbb C}> 1,$ the inequality \eqref{bound for C_2} is 
yet another way to see that the von Neumann inequality fails eventually. 
We refer to the inequality \eqref{bound for C_2} as {\tt  the Varopoulos inequality}. 
In the paper \cite{V2}, Varopoulos had implicitly asked  
if 
$\lim_{n\to\infty}C_2(n)=K_G^\mathbb{C}?$  
Recently, first named author of this paper,  
has improved the Varopoulos inequality \eqref{bound for C_2}:
\begin{equation}\label{Improved bound for C_2}
K_G^\mathbb C \leq \lim_{n\to \infty} C_2(n) \leq \frac{3\sqrt{3}}{4} K_G^\mathbb C.
\end{equation}
This inequality is proved by first obtaining a bound for the second derivative of any holomorphic map 
$f:\D^n\to \D,$ namely, 
$\|D^2f(0)\|_{\infty\to 1} \leq \tfrac{3\sqrt{3}}{2}.$ 
In this paper, we answer the question of Varopoulos in the negative 
by improving the lower bound in the inequality \eqref{bound for C_2}. Indeed,  
we prove that 
$\lim_{n\to \infty}C_2(n)\geq 1.118 K_{G}^{\mathbb C}.$ 

In what follows, for each $p\in [1,\infty]$ and $n\in\mathbb{N},$ 
we denote the normed linear space $(\mathbb{C}^n,\|\cdot\|_p)$ 
by $\ell^p(n)$ and when the space is $(\mathbb{R}^n,\|\cdot\|_p)$ 
then we choose to denote it by $\ell^p_\mathbb{R}(n).$   
Let $\mathbb C_2^s[Z_1,\ldots , Z_n]$ denote the set of all homogeneous polynomials of degree two in $n$ - variables. 
A  homogeneous polynomial of degree two in $n$ - variables is of the form 
\[p(z_1,\ldots,z_n)=\sum_{j,k=1}^{n}a_{jk}z_jz_k,\]
where $A_p:=\big (\!\!\big ( a_{j k} \big ) \!\!\big )$ is a symmetric matrix associated to $p$. 
Define the map 
$\mathscr{A}_n:\mathbb C_2^s[Z_1,\ldots , Z_n] \to M^s_n$ 
by the rule $\mathscr{A}_n(p)=A_{p},$ 
where $M_n^s$ is the set of all symmetric matrices of order $n.$ 
Equip $\mathbb C_2^s[Z_1,\ldots , Z_n]$ with the supremum norm $\|\cdot\|_{\mathbb D^n,\infty}$ and $M_n^s$ with the norm  $\|\cdot\|_{\infty \to 1}.$ 
For each $n\in \N,$ $\|\mathscr{A}_n^{-1}\|\leq 1,$ 
therefore $\|\mathscr{A}_n\|\geq 1.$  

In \cite{RG}, it is shown that 
$\lim_{n\to \infty}\|\mathscr{A}_n\|\leq 3\sqrt{3}/4.$ 
In this paper, we prove $\lim_{n\to \infty}\|\mathscr{A}_n\|\geq \pi^2/8,$ 
improving the bound  
$\lim_{n\to \infty}\|\mathscr{A}_n\|\geq 1.2323,$ 
obtained earlier 
in an unpublished article by Holbrook and Schoch (see their related work \cite{HS10}). 

In Section \ref{MaximizingLemma}, we investigate, in some detail, the constant $C_2(n)$. 
We exhibit a large class of examples of \VK type and 
show that the original \VK example is extremal, in an appropriate sense,  in this large class of polynomials.

Let $M_{n}^{+}(\mathbb{C})$ $(\mbox{\rm resp. } M_{n}^{+}(\mathbb{R}))$ 
denote the set of all $n\times n$ complex (real) non-negative definite matrices.

\begin{defi}[Positive Grothendieck Constant]
Suppose $A:=\big ( \!\! \big (a_{jk}\big )\!\!\big )_{n\times n}$ 
is a complex $($real$)$ non-negative definite array 
then there exists $K>0,$ 
independent of $n$ and $A,$ such that \eqref{conclusion} holds. 
The least such constant is denoted by 
$K_G^+(\mathbb C)$ $(\mbox{\rm resp. }K_G^+(\R))$ and called 
complex $($resp. real$)$ Positive Grothendieck constant. 
The values of $K_G^+(\mathbb C)$ and $K_G^+(\R)$ 
are exactly $4/\pi$ and $\pi/2$ respectively, see \cite[page 259-260]{GrPis} and \cite[Remark following Theorem 5.4]{FactorizationPisier}. 
To the reader, we also refer \cite[Theorem 1.3]{PL} for these constants.
\end{defi}
The non-negative definite Grothendieck constant plays an important role in operator theory. We refer \cite[Theorem 1.9]{BM} for some important connections.

For any $n\times n$ complex matrix $A:=\big (\!\! \big (a_{jk}\big )\!\!\big ),$ 
we associate a homogeneous polynomial of degree two denoted by $p_{\!_A}$ and defined by 
$p_{\!_A}(z_1,\ldots,z_n)=\sum_{j,k=1}^n a_{jk} z_j z_k.$
Suppose $p$ is the following polynomial of degree at most two in $n$ - variables and of the form  
\[p(z_1,\ldots,z_n)=a_0+\sum_{j=1}^{n}a_jz_j + \sum_{j,k=1}^{n}a_{jk}z_jz_k\]
with $a_{jk}=a_{kj}$ 
(can be assumed without loss of generality) for all $j,k=1,\ldots,n.$ 
Corresponding to $p,$ one can define the following $(n+1)\times (n+1)$ 
symmetric complex matrix 
\begin{eqnarray}\label{Homogenization}
A(p)=
\left(
\begin{array}{ccccc}
	a_0 & \frac{1}{2} a_1 & \frac{1}{2}a_2 & \cdots & \frac{1}{2}a_n\\
	\frac{1}{2}a_1 & a_{11} & a_{12} & \cdots & a_{1n}\\
	\frac{1}{2}a_2 & a_{12} & a_{22} & \cdots & a_{2n}\\
	\vdots & \vdots & \vdots & & \vdots\\
	\frac{1}{2}a_n & a_{1n} & a_{2n} & \cdots & a_{nn}\\
\end{array}
\right).
\end{eqnarray}
It can be seen that $\|p\|_{\D^n,\infty}=\|p_{\!_{A(p)}}\|_{\D^{n+1},\infty}.$ 
We define the following quantity: 
\[C_{2}^{+}(n)=\sup\Big\{\frac{\!\!\!\|p_{\!_A}(\boldsymbol T)\|}{\ \ \ \ \, \|p_{\!_A}\|_{\D^n,\infty}}:0\neq A\in M_n^+(\mathbb{C}),\,  \boldsymbol T\in \ \mathscr C_n \Big\}\]
and let $C_{2}^{+}$ denote the quantity $\lim_{n\to \infty} C_{2}^{+}(n).$ 
It is clear from the definitions that $C_2(n)\geq C_{2}^{+}(n)$ for each $n.$ 
In this paper, we prove that $\lim_{n\to \infty}C_{2}^{+}(n)=\pi/2.$ 
Since $K_{G}^{\mathbb{C}}\leq 1.4049$ therefore 
$\lim_{n\to \infty}C_2(n)\geq \pi/2\geq 1.118K_{G}^{\mathbb C}.$

\section{Improvement of the Lower Bound in  the Varopoulos inequality}\label{Improvement}
In this section we improve the lower bound of  the Varopoulos inequality 
and as a consequence, we answer negatively a question of Varopoulos posed in the paper \cite{V2}. 
The following theorem (see \cite{RG}) concerns an improvement in 
the upper bound of 
the Varopoulos inequality.  

\begin{thm}
	Suppose $p$ is a polynomial of degree at most 2 in $n$ - variables 
	and $\boldsymbol T\in\mathscr{C}_n.$ Then
	$\|p(\boldsymbol T)\|
	\leq \frac{3\sqrt{3}}{4} K_{G}^{\mathbb C}\|p\|_{\D^n,\infty}.$
\end{thm}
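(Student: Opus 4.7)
The plan is to combine two ingredients: a Grothendieck-type upper bound on $\|p(\boldsymbol T)\|$ in terms of the $\|\cdot\|_{\infty\to 1}$-norm of a naturally associated matrix, and an analytic estimate on that matrix norm in terms of $\|p\|_{\D^n,\infty}$. Write $p(z)=a_0+\sum_{j=1}^{n}a_jz_j+\sum_{j,k=1}^{n}a_{jk}z_jz_k$ with $a_{jk}=a_{kj}$, and attach to $p$ the symmetric $(n+1)\times(n+1)$ matrix $A(p)$ from \eqref{Homogenization}.

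First I would establish the bilinear identity
\begin{equation*}
\langle p(\boldsymbol T)x,y\rangle \;=\; \sum_{j,k=0}^{n} A(p)_{jk}\,\langle v_k,u_j\rangle,
\end{equation*}
where, for unit vectors $x,y\in\mathbb{H}$, one sets $v_0=x$, $v_k=T_kx$ for $1\le k\le n$, and $u_0=y$, $u_j=T_j^*y$ for $1\le j\le n$. The factor $\tfrac{1}{2}$ sitting in the off-diagonal entries $A(p)_{0j}=A(p)_{j0}=\tfrac{1}{2}a_j$ absorbs the fact that each linear contribution $a_j\langle T_jx,y\rangle$ is produced twice by the symmetric double sum, while commutativity of $\boldsymbol T$ is used in the quadratic block through $\langle T_kx,T_j^*y\rangle=\langle T_jT_kx,y\rangle$. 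Each $u_j$ and $v_k$ has norm at most $1$, so Grothendieck's inequality applied to the complex symmetric matrix $A(p)$ gives
\begin{equation*}
|\langle p(\boldsymbol T)x,y\rangle|\;\le\;K_G^{\mathbb C}\,\|A(p)\|_{\infty\to 1},
\end{equation*}
and hence $\|p(\boldsymbol T)\|\le K_G^{\mathbb C}\,\|A(p)\|_{\infty\to 1}$ after taking suprema in $x,y$.

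Second, I would bound $\|A(p)\|_{\infty\to 1}$ by $\|p\|_{\D^n,\infty}$. The homogenization identity recorded in the excerpt gives $\|p\|_{\D^n,\infty}=\|p_{\!_{A(p)}}\|_{\D^{n+1},\infty}$, and since $p_{\!_{A(p)}}$ is homogeneous of degree two, its coefficient matrix is exactly $A(p)$ and equals $\tfrac{1}{2}D^2p_{\!_{A(p)}}(0)$. Applying the second-derivative estimate from \cite{RG}, namely $\|D^2f(0)\|_{\infty\to 1}\le \tfrac{3\sqrt{3}}{2}$ for every holomorphic $f:\D^m\to\D$, to the normalized map $f=p_{\!_{A(p)}}/\|p\|_{\D^n,\infty}$, yields $\|A(p)\|_{\infty\to 1}\le \tfrac{3\sqrt{3}}{4}\|p\|_{\D^n,\infty}$. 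Chaining this with the previous inequality produces the stated constant $\tfrac{3\sqrt{3}}{4}K_G^{\mathbb C}$.

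The only serious obstacle in an independent attempt is the sharp second-derivative bound $\tfrac{3\sqrt{3}}{2}$, which is the analytic core of the improvement over Varopoulos's original constant $2$; I simply import it from \cite{RG}. The bilinear identity and the passage through Grothendieck are essentially Varopoulos's original strategy in \cite{V2}, so once the derivative input is in hand the proof reduces to the direct calculation above.
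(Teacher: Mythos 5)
Your argument is correct and is essentially the proof the paper relies on: the theorem is stated there without proof, with a citation to \cite{RG}, and the paper's introduction describes exactly your route — pass to the homogenized symmetric matrix $A(p)$, get $\|p(\boldsymbol T)\|\le K_G^{\mathbb C}\|A(p)\|_{\infty\to 1}$ from the Varopoulos--Grothendieck bilinear argument, and control $\|A(p)\|_{\infty\to 1}\le\frac{3\sqrt{3}}{4}\|p\|_{\D^n,\infty}$ via the second-derivative estimate $\|D^2f(0)\|_{\infty\to 1}\le\frac{3\sqrt{3}}{2}$. Importing that derivative bound from \cite{RG}, which is the analytic core, is consistent with what the paper itself does, so there is nothing to fix.
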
 
Throughout this paper, the vectors are assumed to be row vectors unless specified otherwise. 
The following series of lemmas are 
the key ingredients of this paper.
\begin{lemm}\label{UpperQuantity}
For any symmetric $n\times n$ matrix $A=\big(\!\!\big(a_{jk}\big)\!\!\big),$ we have the equality:
\[\sup_{\substack{\|x_j\|_{\ell^{2}}\leq 1}}\Big|\sum_{j,k=1}^{n}a_{jk}\langle x_j, x_k\rangle\Big| = 
\sup_{\substack{\|R_j\|_{\ell^{2}_{\mathbb R}}\leq 1}}\Big|\sum_{j,k=1}^{n}a_{jk}\langle R_j, R_k\rangle\Big|.\]
\end{lemm}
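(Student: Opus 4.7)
The plan is to prove the two inequalities separately, with the more interesting direction being that the complex supremum cannot exceed the real one; this is exactly where the hypothesis that $A$ is symmetric (not Hermitian) comes in.

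For the direction $\sup_{\R} \leq \sup_{\C}$, any real unit vector $R_j \in \ell^2_{\mathbb{R}}$ may be regarded as a complex unit vector in $\ell^2$ via the natural embedding $\mathbb R \hookrightarrow \mathbb C$; the complex inner product of two real vectors coincides with their real inner product, so the right-hand expression is recovered as a particular instance of the left-hand one. This direction requires no use of the symmetry of $A$.

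For the converse $\sup_{\C} \leq \sup_{\R}$, I would fix complex unit vectors $x_j\in\ell^2$ and write $x_j = u_j + i v_j$ with $u_j, v_j\in\ell^2_{\mathbb R}$. Expanding,
\[
\langle x_j, x_k\rangle
= \langle u_j,u_k\rangle + \langle v_j, v_k\rangle
+ i\bigl(\langle v_j, u_k\rangle - \langle u_j, v_k\rangle\bigr).
\]
The key observation is that when this is summed against $a_{jk}$, the imaginary contribution collapses: since $a_{jk}=a_{kj}$, interchanging the summation indices $j\leftrightarrow k$ in $\sum_{j,k} a_{jk}\langle u_j,v_k\rangle$ yields $\sum_{j,k} a_{jk}\langle v_j, u_k\rangle$, so the two sums cancel. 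Thus
\[
\sum_{j,k=1}^{n}a_{jk}\langle x_j,x_k\rangle
= \sum_{j,k=1}^{n}a_{jk}\bigl(\langle u_j,u_k\rangle+\langle v_j,v_k\rangle\bigr)
= \sum_{j,k=1}^{n}a_{jk}\langle U_j,U_k\rangle,
\]
where $U_j := (u_j,v_j)\in \ell^2_{\mathbb R}\oplus \ell^2_{\mathbb R}$, a real Hilbert space isometric to $\ell^2_{\mathbb R}$. Since $\|U_j\|^2 = \|u_j\|^2+\|v_j\|^2 = \|x_j\|^2\leq 1$, the right-hand side is bounded by the supremum over real unit vectors, which completes the proof.

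The only nontrivial point is the cancellation of the imaginary cross-terms; this uses the symmetry of $A$ decisively, and the identity would fail for a general complex matrix (in particular it would not hold for self-adjoint $A$ with the standard sesquilinear pairing, since then the expression is already real but the argument would have to be reworked). Apart from that, the proof amounts to bookkeeping with the real-part/imaginary-part decomposition of each $x_j$.
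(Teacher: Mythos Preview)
Your proof is correct and is essentially the same argument as the paper's: the paper also writes $R_j=(\Re x_j,\Im x_j)$ (disguised as $(\tfrac{\overline{x}_j+x_j}{2},\,i\tfrac{\overline{x}_j-x_j}{2})$), computes $\langle R_j,R_k\rangle=\Re\langle x_j,x_k\rangle$, and then uses the symmetry $a_{jk}=a_{kj}$ to replace $\langle x_j,x_k\rangle$ by its real part in the sum. Your vectors $U_j$ are precisely the paper's $R_j$, and the cancellation of the imaginary cross-terms you isolate is exactly the symmetrization step $\sum a_{jk}\langle x_j,x_k\rangle=\sum a_{jk}\tfrac{\langle x_j,x_k\rangle+\langle x_k,x_j\rangle}{2}$ carried out there.
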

\begin{proof}
Fix $x_j\in \mathbb C^m$ with $\|x_j\|_{\ell^{2}(m)}\leq 1$ for $j=1,\ldots,n.$ 
Define $R_j=\big(\frac{\overline{x}_j+x_j}{2},i\frac{\overline{x}_j-x_j}{2}\big)$ for $j=1,\ldots, n.$ We see that 
\begin{eqnarray*}
\langle R_j,R_k \rangle &=& \sum\limits_{p=1}^{m}\Big(\frac{\overline{x}^{(p)}_j+x^{(p)}_{j}}{2}\Big)\Big(\frac{\overline{x}^{(p)}_k+x^{(p)}_{k}}{2}\Big)\\
&&+\, i^2\sum\limits_{p=1}^{m}\Big(\frac{
\overline{x}^{(p)}_j-x^{(p)}_{j}}{2}\Big)\Big(\frac{\overline{x}^{(p)}_k-x^{(p)}_{k}}{2}\Big)\\
&=& \sum\limits_{p=1}^{m}(\Re x_j^{(p)} \Re x_k^{(p)} + \Im x_j^{(p)} \Im x_k^{(p)})\\
&=& \Re \Big(\sum\limits_{p=1}^{m}x_j^{(p)}\overline{x}^{(p)}_j\Big)\\
&=& \frac{\langle x_j, x_k\rangle + \langle x_k,x_j \rangle}{2},   
\end{eqnarray*} 
where $\Re z$ and $\Im z$ denote the real and imaginary part of $z$ respectively. 
In particular, we get that $\|R_j\|_{\ell^2_{\R}(2m)}=\|x_j\|_{\ell^2(m)}$ for each $j=1,\ldots,n.$
Since $A$ is a symmetric matrix therefore 
\begin{eqnarray*}
\sum_{j,k=1}^{n} a_{jk}\langle x_j ,x_k \rangle &=& \sum_{j,k=1}^{n} a_{jk}\langle x_k ,x_j \rangle\\
&=& \sum_{j,k=1}^{n} a_{jk}\frac{\langle x_j ,x_k \rangle + \langle x_k ,x_j \rangle}{2}\\
&=& \sum_{j,k=1}^{n} a_{jk}\langle R_j ,R_k \rangle. 
\end{eqnarray*}
This shows that for each $m\in\N$ and symmetric matrix $A,$ one gets the following identity 
\[\sup_{\|x_j\|_{\ell^2(m)}\leq 1}\Big|\sum a_{jk}\langle x_j ,x_k \rangle\Big|=\sup_{\|R_j\|_{\ell^2_{\R}(2m)}\leq 1}\Big|\sum a_{jk}\langle R_j ,R_k \rangle\Big|.\]
This completes the proof.
\end{proof}

\begin{rema}\label{S(A)}
For any matrix $A,$ 
one can associate a symmetric matrix $S(A)=(A+A^{\rm t})/{2},$ 
which has the property 
$\|p_{\!_{A}}\|_{\D^n,\infty}=\|p_{\!_{S(A)}}\|_{\D^n,\infty}.$ 
Moreover, if $A$ 
is a non-negative definite matrix then $S(A)$ is a real 
non-negative definite matrix.  
\end{rema}

\begin{lemm}\label{LowerQuantity}
Suppose $A=\big(\!\!\big(a_{jk}\big)\!\!\big)$ is a non-negative definite  $n\times n$ matrix. Then 
\[\sup_{z_j\in \T}\Big|\sum_{j,k=1}^{n}a_{jk}z_jz_k\Big|=\sup_{s_j=\pm 1}\sum_{j,k=1}^{n}a_{jk}s_js_k.\]
\end{lemm}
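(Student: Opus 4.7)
The plan is to reduce the problem to the case of a real symmetric positive semidefinite matrix, remove the absolute value on the left by a standard phase-absorption trick, and then exploit convexity to pass from the torus to the discrete cube $\{\pm 1\}^n$. Both sides of the proposed identity depend only on the symmetrization $S(A)=(A+A^{\mathrm{t}})/2$, since the monomials $z_jz_k$ and $s_js_k$ are symmetric in the indices $j,k$. By Remark~\ref{S(A)}, $S(A)$ is a real non-negative definite matrix whenever $A$ is non-negative definite, so I may replace $A$ by $S(A)$ and henceforth assume that $A$ is real symmetric PSD with $a_{jk}=a_{kj}\in\R$.

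Next, I would use the elementary identity $|w|=\sup_{|\alpha|=1}\mathrm{Re}(\alpha w)$ to write
\[\sup_{z\in\T^n}\Big|\sum_{j,k=1}^{n}a_{jk}z_jz_k\Big|=\sup_{z\in\T^n,\,|\alpha|=1}\mathrm{Re}\Big(\alpha\sum_{j,k=1}^{n}a_{jk}z_jz_k\Big).\]
For fixed $\alpha=e^{i\phi}$, the unitary rephasing $z_j\mapsto e^{-i\phi/2}z_j$ is a bijection of $\T^n$ that sends $\alpha z_jz_k$ to $z_jz_k$, so the supremum is independent of $\alpha$ and collapses to $\sup_{z\in\T^n}\mathrm{Re}\bigl(\sum a_{jk}z_jz_k\bigr)$. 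Writing $z_j=u_j+iv_j$ with $u_j,v_j\in\R$ and $u_j^2+v_j^2=1$ and expanding, the symmetry $a_{jk}=a_{kj}$ kills the imaginary cross-term and leaves
\[\mathrm{Re}\Big(\sum_{j,k=1}^{n}a_{jk}z_jz_k\Big)=\sum_{j,k=1}^{n}a_{jk}u_ju_k-\sum_{j,k=1}^{n}a_{jk}v_jv_k.\]

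Positive semidefiniteness now enters twice. First, it forces $\sum a_{jk}v_jv_k\ge 0$, so dropping that term gives the upper bound $\sum a_{jk}u_ju_k$. Second, the quadratic $u\mapsto\sum a_{jk}u_ju_k$ is convex (its Hessian is $A$), so its maximum over the cube $\{u\in\R^n:u_j^2\le 1\}$ is attained at a vertex $u\in\{\pm 1\}^n$. Chaining these inequalities yields $\sup_{z\in\T^n}\bigl|\sum a_{jk}z_jz_k\bigr|\le\sup_{s_j=\pm 1}\sum a_{jk}s_js_k$. The reverse inequality is immediate, since for $z=s\in\{\pm 1\}^n\subset\T^n$ one has $\sum a_{jk}s_js_k\ge 0$ by PSD-ness, so the absolute value is harmless.

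I do not foresee a genuine obstacle; the argument is a short chain of elementary moves. The only subtlety worth flagging is that positive semidefiniteness is used tightly at exactly two steps---discarding the $-v^{\mathrm{t}}Av$ term and passing from $[-1,1]^n$ to $\{\pm 1\}^n$ via convexity---and this is precisely what forces equality rather than a mere inequality.
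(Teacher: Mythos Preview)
Your proof is correct and follows essentially the same route as the paper: reduce to real PSD via $S(A)$, absorb the phase to replace the modulus by the real part, split the real part as a difference of two quadratic forms (the paper writes $z_j=e^{i\theta_j}$ and uses $\cos\theta_j,\sin\theta_j$ where you use $u_j,v_j$), drop the negative term by positive semidefiniteness, and push to the vertices of the cube by convexity. One cosmetic remark: the imaginary part $\sum a_{jk}(u_jv_k+v_ju_k)$ is not actually killed by symmetry---it simply disappears because you are taking the real part---but your displayed formula for $\mathrm{Re}\bigl(\sum a_{jk}z_jz_k\bigr)$ is correct regardless.
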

\begin{proof}
Suppose $A$ is a $n\times n$ complex non-negative definite matrix. 
From Remark \ref{S(A)}, 
we know that $S(A)$  
is a real non-negative definite matrix with $\|p_{\!_{A}}\|_{\D^n,\infty}=\|p_{\!_{S(A)}}\|_{\D^n,\infty}.$    
Thus, to prove this lemma, it suffices to work with real non-negative definite matrices only.
 
Let $(z_1^{0},\ldots, z_n^{0})$ be a maximizing vector for 
$\|p_{\!_{A}}\|_{\D^n,\infty}$ 
i.e.  $(z_1^{0},\ldots, z_n^{0})$ satisfies 
\[\sup_{z_j\in \T}\Big|\sum_{j,k=1}^{n}a_{jk}z_jz_k\Big|= \Big|\sum_{j,k=1}^{n}a_{jk}z_j^{0}z_k^{0}\Big|.\]
Define $\tilde{z}_j=e^{-i\phi/2}z_j^{0}$ for $j=1,\ldots, n,$ 
where $\phi=\arg\big(\sum_{j,k=1}^{n}a_{jk}z_j^{0}z_k^{0}\big).$ 
Then, we have the following  
\[\sum_{j,k=1}^{n}a_{jk}\tilde{z}_j\tilde{z}_k=\Big|\sum_{j,k=1}^{n}a_{jk}z_j^{0}z_k^{0}\Big|.\]
Therefore 
\[\sum_{j,k=1}^{n}a_{jk}\tilde{z}_j\tilde{z}_k+
\overline{\sum_{j,k=1}^{n}a_{jk}\tilde{z}_j\tilde{z}_k}=
2\Big|\sum_{j,k=1}^{n}a_{jk}z_j^{0}z_k^{0}\Big|\]
and hence one concludes the following identity
\[\sup_{z_j\in \T}\Big|\sum_{j,k=1}^{n}a_{jk}z_jz_k+
\overline{\sum_{j,k=1}^{n}a_{jk}z_jz_k}\Big|=
2\sup_{z_j\in \T}\Big|\sum_{j,k=1}^{n}a_{jk}z_jz_k\Big|.\]
Since $A$ is real non-negative definite matrix therefore 
we observe the following
\begin{eqnarray*}
\frac{1}{2}\sup_{z_j\in \T}\Big|\sum_{j,k=1}^{n}a_{jk}z_j z_k &+& \overline{\sum_{j,k=1}^{n}a_{jk}z_jz_k}\Big|=\sup_{\theta_j\in \mathbb{R}}\Big|\sum_{j,k=1}^{n}a_{jk}cos(\theta_j+ \theta_k)\Big|\\
&=&\sup_{\theta_j\in \R}\Big|\sum_{j,k=1}^{n}a_{jk}(cos\theta_j cos\theta_k-sin\theta_j sin\theta_k)\Big|\\
&=& \sum_{j,k=1}^{n}a_{jk}cos\delta_j cos\delta_k-\sum_{j,k=1}^{n}a_{jk} sin\delta_j sin\delta_k\\
&\leq & \sum_{j,k=1}^{n}a_{jk}cos\delta_j cos\delta_k\\
&\leq & \sup_{s_j=\pm 1}\sum_{j,k=1}^{n}a_{jk}s_js_k, 
\end{eqnarray*}
where $\delta_j,\, j=1,\ldots, n,$ are chosen such that 
$\sum_{j,k=1}^{n}a_{jk}cos(\theta_j+ \theta_k)$ 
is positive and attains maximum in modulus at 
$\theta_j=\delta_j$ for $j=1,\ldots,n.$  
The last inequality in above computation can be deduced from the fact that 
``For any convex subset $\Omega$ of $\mathbb{R}^n$ and for any non-negative definite matrix $A$, the function $f:\Omega\to\mathbb{R}$ defined by $f(x)=\langle Ax,x\rangle$ is convex" (see \cite[Corollary 3.9.5]{Hessian}).  
Thus we get the identity 
\[\sup_{z_j\in \T}\big|\sum_{j,k=1}^{n}a_{jk}z_jz_k\big|=\sup_{s_j=\pm 1}\sum_{j,k=1}^{n}a_{jk}s_js_k.\]
This proves the claim.
\end{proof}

We now prove the main theorem as a corollary of Lemma \ref{UpperQuantity} and Lemma \ref{LowerQuantity}.
For the proof, it would be convenient to introduce, what we call, Varopoulos operators. 

Let $\h$ be a separable Hilbert space and 
$\{e_j\}_{j\in\mathbb{N}}$ be an orthonormal basis for $\h$. 
For any $x\in\h$, 
define $x^{\sharp}:\h\to\mathbb C$ by $x^{\sharp}(y)=\sum_{j}x_jy_j,$ 
where $x=\sum_j x_je_j$ and $y=\sum_j y_je_j$. 
For $x,y\in\h$, we set $\left[x^{\sharp},y\right]=x^{\sharp}(y).$ 
Then $\h^{\sharp}:=\left\{x^{\sharp}:x\in\h\right\}$ 
is a Hilbert space when equipped with the operator norm.
Since the map $\phi:\h\to\h^{\sharp}$ defined by 
$\phi(x)=x^{\sharp}$ is a linear onto isometry,  
therefore $\h^{\sharp}$ is linearly 
(as opposed to the usual anti-linear identification) 
isometrically isomorphic to $\h$. 
The following definition is taken from the Ph.D. thesis of the first named author of this paper \cite{GR} submitted to the Indian Institute of Science in 2015.

\begin{defi}[Varopoulos Operator]
	Let $\mathbb{H}$ be a separable Hilbert space. 
	For $x,y\in\mathbb{H}$, 
	define the Varopoulos operator $T_{x,y}:\mathbb C \oplus \h \oplus \mathbb C \to \mathbb C \oplus \h \oplus \mathbb C$, corresponding to the pair $(x,y)$, to be the linear transformation with the matrix representation:
	\[T_{x,y}=
	\left(
	\begin{array}{ccc}
		0 & x^{\sharp} & 0\\
		0 & 0 & y\\
		0 & 0 & 0\\
	\end{array}
	\right).\]
\end{defi}
Notice that for any pairs $(x_1,y_1)$ and $(x_2,y_2)$ in $\mathbb{H}\oplus \mathbb{H},$ the corresponding Varopoulos operators $T_{x_1,y_1}$ and $T_{x_1,y_1}$ commute if and only if $[x_1^{\sharp},y_2]=[x_2^{\sharp},y_1].$ 
If $x=y,$ then we set $T_x:=T_{x,x}.$ Since for any $x_1,x_2\in\mathbb{H},$ we have $[x_1^\sharp,x_2]=[x_2^\sharp,x_1],$ the corresponding Varopoulos operators $T_{x_1}$ and $T_{x_2}$ commute.

\begin{thm}\label{MainTheorem}
$C_{2}^{+}=\pi/2.$
\end{thm}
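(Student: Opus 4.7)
The plan is to establish matching bounds $\pi/2 \le C_2^+ \le \pi/2$.

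For the lower bound I use Varopoulos operators $T_{R_j}$ attached to real unit vectors $R_1,\ldots,R_n$ in a real subspace of $\h$: these commute, since $[R_j^{\sharp},R_k]=[R_k^{\sharp},R_j]$, and are contractions. Multiplying two Varopoulos operators yields a rank-one operator whose only nonzero entry sits in the $(1,3)$ block and equals $[R_j^{\sharp},R_k]=\langle R_j,R_k\rangle$, hence
\[ p_A(T_{R_1},\ldots,T_{R_n})=\Bigl(\sum_{j,k}a_{jk}\langle R_j,R_k\rangle\Bigr)E_{13},\qquad \|p_A(T_{R_1},\ldots,T_{R_n})\|=\Bigl|\sum_{j,k}a_{jk}\langle R_j,R_k\rangle\Bigr|. \]
Since $A$ is PSD, Lemma~\ref{LowerQuantity} gives $\|p_A\|_{\D^n,\infty}=\sup_{s\in\{\pm 1\}^n}\sum a_{jk}s_js_k$. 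The tightness of the real positive Grothendieck constant $K_G^+(\R)=\pi/2$ now supplies, for every $\epsilon>0$, an $n$, a real PSD $A$, and real unit $R_j$'s with $\sum a_{jk}\langle R_j,R_k\rangle>(\pi/2-\epsilon)\sup_s\sum a_{jk}s_js_k$, producing $C_2^+\ge\pi/2$.

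For the upper bound I first replace $A$ by $S(A)=\Re A$, which is real symmetric PSD and preserves both $p_A$ and $\|p_A\|_{\D^n,\infty}$ by Remark~\ref{S(A)}. For commuting contractions $\boldsymbol T$ and unit vectors $\xi,\eta$,
\[ \langle p_A(\boldsymbol T)\xi,\eta\rangle=\sum_{j,k}a_{jk}\langle T_k\xi,T_j^*\eta\rangle. \]
Setting $u_k=T_k\xi$ and $v_j=T_j^*\eta$ (each of norm at most one) and using the symmetry of $A$ to relabel indices, the right-hand side becomes $B(u,v):=\sum_{j,k}a_{jk}\langle u_j,v_k\rangle$. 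A short verification shows that $B$ is a Hermitian positive semi-definite sesquilinear form (since $A$ is real symmetric PSD), so Cauchy--Schwarz delivers $|B(u,v)|^2\le B(u,u)\,B(v,v)$. Each diagonal factor $B(u,u)=\sum a_{jk}\langle u_j,u_k\rangle$ is at most $\sup_{\|x_j\|\le 1}|\sum a_{jk}\langle x_j,x_k\rangle|$; Lemma~\ref{UpperQuantity} equates this complex supremum with the corresponding real one, and then $K_G^+(\R)=\pi/2$ combined with Lemma~\ref{LowerQuantity} bounds that real supremum by $(\pi/2)\|p_A\|_{\D^n,\infty}$. Hence $|B(u,v)|\le (\pi/2)\|p_A\|_{\D^n,\infty}$, so $\|p_A(\boldsymbol T)\|\le(\pi/2)\|p_A\|_{\D^n,\infty}$ and $C_2^+\le\pi/2$.

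The main technical subtlety is the choice of positive Grothendieck constant. A direct invocation of $K_G^+(\mathbb C)=4/\pi$ on the complex Hilbert space $\h$ would only bound $\|p_A(\boldsymbol T)\|$ by $(4/\pi)\|A\|_{\infty\to 1}$; but for a PSD $A$ the ratio $\|A\|_{\infty\to 1}/\|p_A\|_{\D^n,\infty}$ is a nontrivial, potentially strictly larger-than-one quantity, so this does not immediately yield $\pi/2$. The key device is Lemma~\ref{UpperQuantity}, which for symmetric matrices collapses the complex supremum $\sum a_{jk}\langle u_j,u_k\rangle$ into a real one and thereby unlocks the \emph{real} positive Grothendieck constant $\pi/2$; Cauchy--Schwarz in the PSD sesquilinear form $B$ is the auxiliary tool that reduces the mixed bilinear expression $B(u,v)$ to its diagonal, where Lemma~\ref{UpperQuantity} can actually be applied.
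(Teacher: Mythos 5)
Your proposal is correct and follows essentially the same route as the paper: the lower bound via commuting Varopoulos operators $T_{R_j}$ together with Lemma~\ref{LowerQuantity} and the sharpness of $K_G^{+}(\mathbb{R})=\pi/2$, and the upper bound via symmetrization $S(A)$, a Cauchy--Schwarz reduction of $\sum a_{jk}\langle T_k\xi,T_j^*\eta\rangle$ to the diagonal quadratic form, then Lemma~\ref{UpperQuantity} and $K_G^{+}(\mathbb{R})$ combined with Lemma~\ref{LowerQuantity}. Your packaging of the paper's explicit factorization computation as Cauchy--Schwarz for the PSD sesquilinear form $B$ is only a cosmetic streamlining of the same argument.
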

\begin{proof}
Suppose $A$ is $n\times n$ non-negative definite matrix 
and suppose that $(T_1,\ldots,T_n)$ is a tuple in $\C_n,$ then,
\[p_{\!_A}(T_1,\ldots,T_n)=p_{\!_{S(A)}}(T_1,\ldots,T_n).\]
If $b_{jk}$ denotes the $(j,k)$ entry of 
the matrix $S(A)$ then for every $x,y\in\mathbb{H},$ we have the following

\begin{eqnarray*}
\sup_{\|x\|\leq 1,\,\|y\|\leq 1}| \langle p_{\!_{S(A)}}(T_1,\ldots,T_n)x,y \rangle | &=& \sup_{\|x\|\leq 1,\,\|y\|\leq 1}\big| \sum_{j,k=1}^{n} b_{jk}  \langle T_j x, T_k^* y\rangle \big|\\
& \leq & \sup_{\substack{\|x_j\|\leq 1,\,\|y_k\|\leq 1}}\big| \sum_{j,k=1}^{n} b_{jk}\langle x_j, y_k\rangle \big|\\
& \leq & \sup_{\substack{\|x_j\|\leq 1}}\big| \sum_{j,k=1}^{n} b_{jk}\langle x_j, x_k\rangle \big|. 
\end{eqnarray*}
The last inequality is explained by the following computation, which can essentially be found in \cite{AN}. 
Since $(\!(b_{jk})\!)$ is a non-negative definite matrix 
therefore 
there exist $V_1,\ldots,V_n\in\mathbb{C}^n$ such that $b_{jk}=\langle V_j, V_k\rangle$ 
for each $j,k=1,\ldots,n.$ 
\begin{eqnarray*}
\sup_{\substack{\|x_j\|\leq 1\\ \|y_k\|\leq 1}}\,\Big| \sum_{j,k=1}^{n} b_{jk}  \langle x_j, y_k\rangle \Big|
&=& \sup_{\substack{\|x_j\|\leq 1\\ \|y_k\|\leq 1}}\,\Big| \sum_{p=1}^{m}\big\langle\sum_{j=1}^{n}x_{jp}V_j,\sum_{k=1}^{n}{y}_{kp}V_k\big\rangle\Big|\\
&\leq & \sup_{\substack{\|x_j\|\leq 1\\ \|y_k\|\leq 1}} \sum_{p=1}^{m}\Big|\big\langle\sum_{j=1}^{n}x_{jp}V_j,\sum_{k=1}^{n}{y}_{kp}V_k\big\rangle\Big|\\
&\leq & \sup_{\substack{\|x_j\|\leq 1}} \Big(\sum_{p=1}^{m}\big\|\sum_{j=1}^{n}x_{jp}V_j\big\|^2\Big)^{1/2}\\
&&\sup_{\|y_k\|\leq 1} \Big(\sum_{p=1}^{m}\big\|\sum_{k=1}^{n}{y}_{kp}V_k\big\|^2\Big)^{1/2}\\
&=& \sup_{\|x_j\|\leq 1}\sum_{p=1}^{m}\big\|\sum_{j=1}^{n}x_{jp}V_j\big\|^2\\
&=& \sup_{\|x_j\|\leq 1} \sum_{p=1}^{m}\big\langle\sum_{j=1}^{n}x_{jp}V_j,\sum_{k=1}^{n}{x}_{kp}V_k\big\rangle\\
&=& \sup_{\|x_j\|\leq 1}\big| \sum_{j,k=1}^{n} b_{jk}  \langle x_j, x_k\rangle \big|.\\
\end{eqnarray*}
Therefore, we get the following inequality
\[\sup_{\|x\|\leq 1,\,\|y\|\leq 1}| \langle p_{\!_{S(A)}}(T_1,\ldots,T_n)x,y \rangle |\leq \sup_{\|x_j\|\leq 1}\big| \sum_{j,k=1}^{n} b_{jk}  \langle x_j, x_k\rangle \big|.\] 
Also note that 
\begin{eqnarray*}
\|(\!(b_{jk})\!)\|_{\ell^\infty_\mathbb{R}(n)\to \ell^1_\mathbb{R}(n)}&=& \sup_{s_j,t_k \in [-1,1]}\Big| \sum_{j,k=1}^{n} b_{jk}  s_j t_k \Big|\\
&=& \sup_{s_j,t_k \in [-1,1]}\Big| \big\langle\sum_{j=1}^{n}s_{j}V_j,\sum_{k=1}^{n}{t}_{k}V_k\big\rangle\Big|\\
&\leq &  \sup_{s_j,t_k \in [-1,1]} \Big(\big\|\sum_{j=1}^{n}s_{j}V_j\big\|^2\Big)^{1/2} \Big(\big\|\sum_{k=1}^{n}{t}_{k}V_k\big\|^2\Big)^{1/2}\\
&=&  \sup_{s_j \in [-1,1]}\big\|\sum_{j=1}^{n}s_{j}V_j\big\|^2\\
&=& \sup_{s_j \in [-1,1]}\Big| \sum_{j,k=1}^{n} b_{jk}  s_j s_k \Big|.
\end{eqnarray*}
Thus we get the identity 
$\|(\!(b_{jk})\!)\|_{\ell^\infty_\mathbb{R}(n)\to \ell^1_\mathbb{R}(n)}=\|p_{\!_{S(A)}}\|_{[-1,1]^n,\infty}.$
By this identity and Lemma \ref{LowerQuantity} 
we get the following 
\begin{eqnarray}\label{preVIandGI}
\sup_{\substack{A\in M_n^+(\mathbb{C})\setminus \{0\}}}&&\!\!\!\!\!\!\!\!\!\!\!\frac{\sup_{\boldsymbol T\in \mathscr{C}_n}\|p_{\!_A}(T_1,\ldots,T_n)\|}{\|p_{\!_A}\|_{\D^n,\infty}}\\ 
&&\!\!\!\!\!\!\!\!\!\!\!\!\!\!\!\!\!\!\!\!\!\!\leq \sup_{\substack{B\in M_n^+(\mathbb{R})\setminus \{0\}}}\frac{\sup_{\substack{\|x_j\|_{\ell^2}\leq 1}}\big| \sum_{j,k=1}^{n} b_{jk}\langle x_j, x_k\rangle \big|}{\|B\|_{\ell^{\infty}_{\R}(n)\to \ell^1_{\R}(n)}}\nonumber. 
\end{eqnarray}
Using the definition of $C_{2}^{+},$ Lemma \ref{UpperQuantity} and the inequality  \eqref{preVIandGI}, we get the following 
\begin{eqnarray*}
C_{2}^{+} \leq \!\!\!\!\!\sup_{\substack{B\in M_n^+(\mathbb{R})\setminus \{0\}}}\frac{\sup_{\substack{\|R_j\|_{\ell^2_{\R}}\leq 1}}\big| \sum_{j,k=1}^{n} b_{jk}\langle R_j, R_k\rangle \big|}{\|B\|_{\ell^{\infty}_{\R}(n)\to \ell^1_{\R}(n)}}=K_G^+(\R)=\pi/2.
\end{eqnarray*}

Fix an $n\times n$ non-negative definite matrix $A$ 
and $x_j\in \mathbb C^m,$ $j=1,\ldots,n,$ for some $m\in \mathbb{N}.$  
Define the Varopoulos operator $T_j\,(=T_{R_j})$ corresponding to the vector $R_j\in\mathbb{R}^{2m}\,(\subset \mathbb{C}^{2m}),$ where $R_j=(\frac{\overline{x}_j+x_j}{2},i\frac{\overline{x}_j-x_j}{2})$ 
for $j=1,\ldots,n.$ 
Then, taking the form of $p_{\!_A}(T_1,\ldots,T_n)$ into account, we get 
\[\|p_{\!_A}(T_1,\ldots,T_n)\|=\sum_{j,k=1}^{n} a_{jk}\langle R_j, R_k\rangle=\sum_{j,k=1}^{n} \big(\frac{a_{jk}+a_{kj}}{2}\big)\langle R_j, R_k\rangle.\]  
We use Lemma \ref{LowerQuantity} to subsequently obtain
\begin{eqnarray*}
\sup_{\substack{A\in M_n^+(\mathbb{C})\setminus \{0\}}}&&\!\!\!\!\!\!\!\!\!\!\!\!\!\!\!\frac{\sup_{\boldsymbol T\in \mathscr{C}_n}\|p_{\!_A}(T_1,\ldots,T_n)\|}{\ \ \ \ \ \ \ \ \ \, \|p_{\!_A}\|_{\D^n,\infty}}\\
&&\geq \sup_{\substack{B\in M_n^+(\mathbb{R})\setminus \{0\}}}\frac{\sup_{\substack{\|R_j\|_{\ell^2_{\R}}\leq 1}}\big| \sum_{j,k=1}^{n} b_{jk}\langle R_j, R_k\rangle \big|}{\|B\|_{\ell^{\infty}_{\R}(n)\to \ell^1_{\R}(n)}}.
\end{eqnarray*}
This proves the theorem. 
\end{proof}

Theorem \ref{MainTheorem} shows that if 
we restrict ourselves to the set of homogeneous polynomials of degree two 
coming from the real non-negative definite matrices, then we obtain an analogous inequality to that of  Varopoulos, where the factor $2$ on the right disappears  
and the constant $K_G^{\mathbb C}$ gets replaced by $K_G^{+}(\R),$ that is, 
\[\sup_{n, p_A}\|p_{\!_A}(T_1,\ldots,T_n)\|=K_G^{+}(\R),\]
where supremum is taken over all homogeneous polynomials $p_{\!_A}$ of supremum norm at most $1$ 
with $A$ being a non-negative definite matrix, the tuple 
$(T_1,\ldots,T_n)$ being arbitrary in $\mathscr{C}_n$ and $n\in\mathbb{N}.$ 
The next corollary shows that $\lim_{n\to\infty} C_2(n)/K^\mathbb C_G > 1.$ 
This, in turn, answers a long--standing question of Varopoulos, raised in \cite{V2}, in negative. 

\begin{coro} \label{Ratio > 1}
For some $\epsilon>0,$ we have the inequality
$$\lim_{n\to \infty}C_2(n)\geq (1+\epsilon) K_{G}^{\mathbb C}.$$
\end{coro}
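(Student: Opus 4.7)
The plan is to deduce this corollary directly from Theorem \ref{MainTheorem} combined with the known numerical upper bound on the complex Grothendieck constant. The key observation is that every non-negative definite symmetric matrix gives rise to a legitimate homogeneous polynomial of degree two, so by definition of the two quantities one has $C_2(n) \geq C_2^{+}(n)$ for each $n \in \mathbb{N}$. Passing to the limit, this yields
\[
\lim_{n\to\infty} C_2(n) \;\geq\; \lim_{n\to\infty} C_2^{+}(n) \;=\; \frac{\pi}{2},
\]
where the last equality is exactly the content of Theorem \ref{MainTheorem}.

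Next, I would invoke the best known numerical upper bound $K_G^{\mathbb{C}} \leq 1.4049$, which is stated in the paper (see the discussion of the Grothendieck constants in the introduction). Since $\pi/2 \approx 1.5708 > 1.4049 \geq K_G^{\mathbb{C}}$, the ratio $(\pi/2)/K_G^{\mathbb{C}}$ is bounded below by $1.5708/1.4049 > 1.118 > 1$. Concretely, setting $\epsilon := \pi/(2K_G^{\mathbb{C}}) - 1 \geq \pi/(2 \cdot 1.4049) - 1 > 0.118$, we obtain
\[
\lim_{n\to\infty} C_2(n) \;\geq\; \frac{\pi}{2} \;=\; (1+\epsilon) K_G^{\mathbb{C}},
\]
which is the desired strict improvement over the lower half of the Varopoulos inequality \eqref{bound for C_2}.

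There is essentially no obstacle here beyond the machinery already built: the entire difficulty has been absorbed into the proof of Theorem \ref{MainTheorem}, which pins down the supremum over non-negative definite symmetric coefficient matrices exactly as the real positive Grothendieck constant $K_G^{+}(\mathbb{R}) = \pi/2$. The only remaining ingredient is the quantitative fact that $K_G^{\mathbb{C}}$ is strictly smaller than $\pi/2$, and this is guaranteed by the Haagerup--type upper estimate $K_G^{\mathbb{C}} \leq 1.4049$ recorded in the introduction. Thus the corollary follows by a single chain of inequalities, simultaneously answering Varopoulos's question in the negative and giving the explicit numerical improvement $\lim_{n\to\infty} C_2(n) \geq 1.118 \, K_G^{\mathbb{C}}$ promised in the abstract.
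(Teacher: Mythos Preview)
Your proof is correct and follows essentially the same approach as the paper: use $C_2(n)\geq C_2^{+}(n)$, invoke Theorem~\ref{MainTheorem} to get $\lim_{n\to\infty}C_2(n)\geq\pi/2$, and then compare with the known bound $K_G^{\mathbb C}\leq 1.4049$ to extract $\epsilon=0.118$. The paper's proof is just a terser version of exactly this chain of inequalities.
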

\begin{proof}
We know that $C_2(n)\geq C_{2}^{+}(n)$ for each $n\in \N$ 
and $K_G^\mathbb C\leq 1.4049$ 
therefore we have the following 
\[\lim_{n\to \infty}C_2(n) \geq \lim_{n\to \infty}C_{2}^{+}(n)=\pi/2> K_{G}^{\mathbb C}.\]
To complete the proof one can take $\epsilon=0.118.$
\end{proof}

A variant of Corollary \ref{Ratio > 1} on $L^p$ - spaces can be found in  \cite{SRK}.

In the next theorem, we compute a lower bound for the norm of 
$\mathscr{A}_n$ as $n$ tends to infinity. 
This improves a bound obtained earlier in an unpublished paper of 
Holbrook and Schoch where they had proved  that 
$\lim_{n\to \infty}\|\mathscr{A}_n\|\geq 1.2323.$
\begin{thm}
$\lim_{n\to \infty}\|\mathscr{A}_n\|\geq \frac{\pi^2}{8}.$
\end{thm}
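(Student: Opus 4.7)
The plan is to extract the numerical identity $\tfrac{\pi^2}{8} = K_G^+(\mathbb{R})/K_G^+(\mathbb{C}) = (\pi/2)/(4/\pi)$ by playing the sharp real positive Grothendieck constant off the complex positive Grothendieck upper bound on a common real non-negative definite test matrix. The construction parallels the proof of Theorem \ref{MainTheorem}, but the two positive Grothendieck constants now play opposite roles: one as a lower, the other as an upper estimate.

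Fix $\epsilon > 0$. Since $K_G^+(\mathbb{R}) = \pi/2$, by definition of the supremum one can choose $n = n(\epsilon)$, a real non-negative definite $n\times n$ matrix $A = (\!(a_{jk})\!)$, and real unit vectors $R_1,\ldots, R_n$ in some real Hilbert space such that
$$\sum_{j,k=1}^{n} a_{jk}\langle R_j, R_k\rangle \;\geq\; \Big(\tfrac{\pi}{2}-\epsilon\Big)\,\|A\|_{\ell^{\infty}_{\mathbb{R}}(n) \to \ell^{1}_{\mathbb{R}}(n)}.$$
Since $A$ is symmetric, $\mathscr{A}_n(p_{\!_A}) = A$, so $\|\mathscr{A}_n\| \geq \|A\|_{\infty \to 1}\big/\|p_{\!_A}\|_{\D^n,\infty}$. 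I would evaluate the denominator exactly by Lemma \ref{LowerQuantity}, combined with the Cauchy--Schwarz argument already used in the proof of Theorem \ref{MainTheorem}: for $A \geq 0$ one has $\sup_{s=\pm 1}\langle As,s\rangle = \sup_{s,t=\pm 1}\langle As,t\rangle = \|A\|_{\ell^{\infty}_{\mathbb{R}}(n)\to \ell^{1}_{\mathbb{R}}(n)}$, which identifies $\|p_{\!_A}\|_{\D^n,\infty}$ with the real $\ell^\infty \to \ell^1$ operator norm of $A$.

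For the numerator I would regard the real psd matrix $A$ as complex psd and apply the complex positive Grothendieck inequality with $K_G^+(\mathbb{C}) = 4/\pi$: for all complex unit vectors $x_1,\ldots,x_n$,
$$\Big|\sum_{j,k=1}^{n} a_{jk}\langle x_j, x_k\rangle\Big| \;\leq\; \tfrac{4}{\pi}\,\|A\|_{\infty \to 1}.$$
Specializing $x_j = R_j$ (real unit vectors embed as complex unit vectors, with the same inner product) and combining with the previous displayed inequality yields
$$\Big(\tfrac{\pi}{2}-\epsilon\Big)\|p_{\!_A}\|_{\D^n,\infty} \;\leq\; \tfrac{4}{\pi}\,\|A\|_{\infty \to 1},$$
so that $\|\mathscr{A}_n\| \geq \tfrac{\pi^2}{8} - \tfrac{\pi \epsilon}{4}$. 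Monotonicity of $\|\mathscr{A}_n\|$ in $n$, which follows by padding matrices with zero rows and columns (and noting that the polydisc sup norm and the $\infty \to 1$ norm are preserved under such padding), then gives $\lim_{n \to \infty}\|\mathscr{A}_n\| \geq \pi^2/8$ upon letting $\epsilon \to 0$.

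There is no single difficult step. The only subtle point is the identification $\|p_{\!_A}\|_{\D^n,\infty} = \|A\|_{\ell^{\infty}_{\mathbb{R}}(n) \to \ell^{1}_{\mathbb{R}}(n)}$ in the non-negative definite case, which is exactly Lemma \ref{LowerQuantity} together with the convexity/Cauchy--Schwarz observation invoked earlier. Everything else is a clean combination of the known sharp values of the positive Grothendieck constants over $\mathbb{R}$ and $\mathbb{C}$.
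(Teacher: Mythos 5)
Your proposal is correct and is essentially the paper's own argument: both proofs play the sharp value $K_G^+(\mathbb{R})=\pi/2$ (as a lower estimate on a common real non-negative definite near-extremal matrix) against the complex bound $K_G^+(\mathbb{C})=4/\pi$, and both use Lemma \ref{LowerQuantity} together with the Cauchy--Schwarz/sign-vector identity to equate $\|p_{\!_A}\|_{\D^n,\infty}$ with $\|A\|_{\ell^\infty_{\mathbb R}(n)\to\ell^1_{\mathbb R}(n)}$, yielding the ratio $\pi^2/8$. The only difference is cosmetic: you source the near-extremal data directly from the sharpness of $K_G^+(\mathbb{R})$ (so Varopoulos operators and Lemma \ref{UpperQuantity} are not needed), whereas the paper quotes $\lim_n C_2^+(n)=\pi/2$ from Theorem \ref{MainTheorem}, which encodes the same fact.
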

\begin{proof}
Fix a natural number $l.$ 
Since $\lim_{n\to \infty}C_{2}^{+}(n)=\pi/ 2,$ 
there exist $n\in\mathbb{N}$ and a real non-negative definite matrix $A_l=(\!(a^{(l)}_{jk})\!)$ of size $n$  such that 
\[\frac{\sup_{\boldsymbol T\in \mathscr{C}_n}\|\sum a^{(l)}_{jk}T_jT_k\|}{\sup_{z_j\in\T}|\sum a^{(l)}_{jk}z_jz_k|}\geq \pi/2 -1/l.\]
By Lemma \ref{UpperQuantity} and 
the fact that $K_G^+(\mathbb C)=4/\pi,$ we get the following for the matrix $A_l$
\begin{eqnarray*}
4/\pi &\geq & \frac{\sup_{\substack{\|x_j\|_{\ell^{2}}}\leq 1}\sum_{j,k=1}^{n}a^{(l)}_{jk}\langle x_j,x_k \rangle}{\|A_l\|_{\ell^\infty(n)\to \ell^1(n)}}\\
&=&\frac{\sup_{\substack{\|R_j\|_{\ell^{2}_{\R}}\leq 1}}\sum_{j,k=1}^{n}a^{(l)}_{jk}\langle R_j,R_k \rangle}{\|A_l\|_{\ell^\infty(n)\to \ell^1(n)}}\\
&=& \frac{\sup_{\boldsymbol T\in\C_n}\|\sum a^{(l)}_{jk}T_jT_k\|}{\sup_{z_j\in\T}|\sum a^{(l)}_{jk}z_jz_k|}\cdot 
\frac{\sup_{z_j\in\T}\|\sum a^{(l)}_{jk}z_jz_k\|}{\|A_l\|_{\ell^\infty(n)\to \ell^1(n)}}\\
&\geq & (\pi/2-1/l)\frac{\!\!\!\!\!\!\|p_{\!_{A_l}}\|_{\D^n,\infty}}{\ \ \|A_l\|_{\ell^\infty(n)\to \ell^1(n)}}.
\end{eqnarray*}
Rewriting the inequality appearing above, 
we see that, for each $l\in \mathbb{N},$ 
there exists a symmetric matrix $A_l$ such that 
for the corresponding polynomial $p_{\!_{A_l}},$ 
we have the following estimate    
\[\frac{\|A_l\|_{\ell^\infty(n)\to \ell^1(n)}}{\!\!\!\!\!\!\|p_{\!_{A_l}}\|_{\D^n,\infty}}\geq \frac{(\pi/2-1/l)}{4/\pi}.\]
Taking supremum over all the natural numbers $l$ on both the sides, 
we get the following inequality
\[\sup_{l\in\N}\frac{\|A_l\|_{\ell^\infty(n)\to \ell^1(n)}}{\!\!\!\!\!\!\|p_{\!_{A_l}}\|_{\D^n,\infty}}\geq \frac{\pi^2}{8}\approx 1.2337.\]
The result follows immediately from here.
\end{proof}

\section{\VK type examples and the constant \texorpdfstring{$C_2(n)$}{TEXT}}\label{MaximizingLemma} 
In this section, we focus on the constant $C_2(n)$ in more detail. 
We discuss the asymptotic behaviour of $C_2(n)$ and 
exhibit an explicit class of examples of \VK type. 
For this, we mainly rely on a construction which appeared in \cite{FJ}. 
In this case, our main tool is a very general maximizing lemma 
which can also be of independent interest. 
For $n=3$, we successfully construct 
a very wide class of examples like \VK for which 
the von Neumann inequality fails and 
show that the \VK example is extremal on the class of certain 
$3\times 3$ symmetric matrices including symmetric sign matrices.
 
In \cite{FJ}, the authors produced an explicit set of 
real non-negative definite matrices for which 
the positive Grothendieck constant goes up to $1.5$. 
We briefly describe their matrices as follows:

For $l=k(k-1)$, define $F_k=\{v_1,\dots,v_{k(k-1)}\}$, 
the set of all $k$-dimensional vectors with two non-zero components, 
either $1$ and $1$ or $1$ and $-1,$ appearing in that order. 
Define a real $l\times l$ non-negative definite matrix 
$A_k=(\!(a_{ij})\!)_{1\leq i,j\leq l}$ 
as $a_{ij}=\langle v_i,v_j\rangle$. 
In \cite{FJ}, the authors showed that 
$$\frac{\sup_{\|X_i\|_2=1}\sum_{i,j=1}^la_{ij}\langle X_i,X_j\rangle}{{\sup_{\omega_i\in\{1,-1\}}}\sum_{i,j=1}^la_{ij}\omega_i\omega_j}=\frac{3k-3}{2k-1}.$$

Thus, in view of Lemma \ref{LowerQuantity}, 
we get a large class of \VK like examples for which the von Neumann inequality fails and $C_2(k(k-1))\geq\frac{3k-3}{2k-1}$. 
Notice that $\frac{3k-3}{2k-1}$ is an increasing function in $k$ and increases to $\frac{3}{2}$ as $k$ tends to infinity. 
Hence for this explicit class of matrices, we get the lower bound of $\lim_{n\to\infty} C_2(n)$ to be equal to $\frac{3}{2}.$ 
Though we already got the lower bound of $\lim_{n\to\infty} C_2(n)$ to be equal to $\frac{\pi}{2},$ it will be interesting to get an estimate of $C_2(n)$ as a function of $n$.

Motivated by the example of Varopoulos and Kaijser, provided in \cite{V1}, we develop the following maximizing lemma which enables us to compute the supremum norm of polynomials. 
\begin{lemm}[Maximizing Lemma]\label{ML}
Let $\Omega$ be a bounded domain in $\mathbb{R}^n$. Suppose a function $F=(f_1,\dots,f_m):\overline{\Omega}\subseteq\mathbb{R}^n\to \mathbb{C}^m$ 
is a continuously differentiable and bounded function with 
$f_j$ non-vanishing for $j=1,\ldots,m$. 
Then we have the following 
$$\big\{x\in\overline{\Omega}:\|F(x)\|_{\ell^\infty(m)}=\|F\|_{\Omega,\infty}\big\}\subseteq\bigcup_{j=1}^m\big\{x\in\overline{\Omega}:\dim_{\mathbb{R}}\bigvee_{k=1}^n\frac{\partial f_j}{\partial x_k}(x)\leq 1\big\},$$
where $\bigvee\{v_1,\ldots,v_n\}$ denotes the subspace 
spanned by the vectors $v_1,\ldots,v_n$ and 
$\dim_{\mathbb{R}}$ denotes the dimension of the space over the field of real numbers $\mathbb{R}.$ 
\end{lemm}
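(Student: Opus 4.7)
My plan is to reduce the claim to the classical first-order condition for a smooth real-valued function at a maximum, applied to $|f_j|^2$ for a carefully chosen index $j$. Given $x_0\in\overline{\Omega}$ with $\|F(x_0)\|_{\ell^\infty(m)}=\|F\|_{\Omega,\infty}$, the definition of the $\ell^\infty$ norm on $\mathbb{C}^m$ supplies at least one index $j\in\{1,\ldots,m\}$ such that $|f_j(x_0)|=\|F\|_{\Omega,\infty}$; for this $j$, the real-valued function $|f_j|^2=f_j\overline{f_j}$ attains its maximum on $\overline{\Omega}$ precisely at $x_0$.

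Taking $x_0$ to be an interior critical point of $|f_j|^2$, Fermat's theorem yields
\[
\frac{\partial |f_j|^2}{\partial x_k}(x_0) \;=\; 2\,\Re\!\left(\overline{f_j(x_0)}\,\frac{\partial f_j}{\partial x_k}(x_0)\right) \;=\; 0, \quad k=1,\ldots,n.
\]
Because $f_j$ is non-vanishing on $\overline{\Omega}$, we have $f_j(x_0)\neq 0$, and the identity above is equivalent to
\[
\frac{\partial f_j}{\partial x_k}(x_0) \;\in\; i\,\mathbb{R}\cdot f_j(x_0)\quad\text{for every } k\in\{1,\ldots,n\}.
\]
The set $i\,\mathbb{R}\cdot f_j(x_0)$ is a one-dimensional real subspace of $\mathbb{C}\cong\mathbb{R}^2$, so the real span of $\{\partial_k f_j(x_0):1\leq k\leq n\}$ has real dimension at most one, which is precisely the membership of $x_0$ in the set on the right-hand side of the claimed inclusion.

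There is no serious obstacle: the whole content is the observation that at a maximum of $|f_j|^2$, the non-vanishing of $f_j(x_0)$ constrains every partial derivative $\partial_k f_j(x_0)$ to a single real line $i f_j(x_0)\mathbb{R}$ in $\mathbb{C}$. The only mild delicacy I anticipate is treating boundary-attained maximizers. In the intended application, one pulls back a supremum on $\mathbb{D}^n$ to a supremum on the torus $\mathbb{T}^n$ via the periodic parameterization $(\theta_1,\ldots,\theta_n)\mapsto(e^{i\theta_1},\ldots,e^{i\theta_n})$, so every maximizer is effectively an interior critical point of the periodically extended function on $\mathbb{R}^n$, and the argument above applies verbatim.
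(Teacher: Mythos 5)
Your argument is correct and is essentially the paper's own proof: you pick an index $j$ at which the $\ell^\infty$ norm is attained, so $x_0$ maximizes $|f_j|^2$, apply the first-order condition $\Re\big(\overline{f_j(x_0)}\,\partial_{x_k}f_j(x_0)\big)=0$, and use $f_j(x_0)\neq 0$ to force every $\partial_{x_k}f_j(x_0)$ onto the single real line $i\,\mathbb{R}\,f_j(x_0)$ — the paper writes the same orthogonality as $\langle(\Re f_j,\Im f_j),(\Re\partial_{x_k}f_j,\Im\partial_{x_k}f_j)\rangle=0$ in $\mathbb{R}^2$. Your remark about boundary maximizers is a caveat the paper's proof silently shares (it also invokes the vanishing of the partials at the maximum), and it is resolved exactly as you say in the intended periodic application.
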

\begin{proof}
We notice the following identity 
\begin{equation*}
\{x\in\overline{\Omega}:\|F(x)\|_{\ell^\infty(m)}=\|F\|_{\Omega,\infty}\}\subseteq	\bigcup_{j=1}^m\big\{x\in\overline{\Omega}:|f_j(x)|=\|f_j\|_{\Omega,\infty}\big\}.
\end{equation*}
For $f_j:\overline{\Omega}\subseteq\mathbb{R}^n\to \mathbb{C},$ we have 
\begin{eqnarray}\label{Modulusfj}
|f_j|^2=(\Re f_j)^2+(\Im f_j)^2.
\end{eqnarray}
Differentiating \eqref{Modulusfj} with respect to $x_k$ on both the sides, 
we get the following
\begin{equation}\label{PR}
\frac{\partial}{\partial x_k}|f_j|^2=2\Re f_j\Re\frac{\partial f_j}{\partial x_k}+2\Im f_j\Im\frac{\partial f_j}{\partial x_k}.
\end{equation}
By \eqref{PR} and the fact that, at the point of maximum of $|f_j|,$ 
all the partial derivatives of $|f_j|^2$ are zero, we obtain
\begin{equation}\label{pr1}
\langle(\Re f_j,\Im f_j),\big(\Re\frac{\partial f_j}{\partial x_k},\Im\frac{\partial f_j}{\partial x_k}\big)\rangle=0,\ \forall\ 1\leq k\leq n.
\end{equation}
From Equation \eqref{pr1}, 
we observe that, at the point of maximum of $|f_j|^2$, 
the two dimensional vectors 
$\big(\Re\frac{\partial f_j}{\partial x_k},\Im\frac{\partial f_j}{\partial x_k}\big),$ $1\leq k\leq n,$ lie on a line passing through the origin in $\mathbb{R}^2.$
Therefore,
\begin{equation}
\big\{x\in\overline{\Omega}:|f_j(x)|=\|f_j\|_{\Omega,\infty}\big\}\subseteq\big\{x\in\overline{\Omega}:\dim_\mathbb{R}\bigvee_{k=1}^n\frac{\partial f_j}{\partial x_k}(x)\leq 1\big\}.
\end{equation} 
This completes the proof.
\end{proof}
\begin{rema}
To disprove 
the von Neumann inequality in three variables,  
Varopoulos and  Kaijser \cite{V1} considered an 
explicit homogeneous polynomial  
of degree two in three variables. 
While the computation of the supremum norm 
of this particular polynomial is briefly indicated in their paper, 
we indicate below,  using Lemma \ref{ML},
how to compute the supremum norm of the Varopoulos--Kaijser polynomial. 
Of course, this recipe applies to the entire class of Varopoulos polynomials.
\end{rema}
 
The Varopoulos-Kaijser polynomial is the following homogeneous polynomial of degree two 	
$$p(z_1,z_2,z_3)=z_1^2+z_2^2+z_3^2-2z_1z_2-2z_2z_3-2z_3z_1.$$
Without loss of generality, we can take supremum over  
$\{(z_1,z_2,z_3)=(1,e^{i\theta},e^{i\phi}): \theta,\phi\in \mathbb{R}\}$ 
in the expression of $p(z_1,z_2,z_3)$ to compute 
the quantity $\|p\|_{\D^3,\infty}$. If we consider the function   
$g(\theta,\phi)=1+e^{2i\theta}+e^{2i\phi}-2e^{i\theta}-2e^{i\phi}-2e^{i(\theta+\phi)}$
 then $\|p\|_{\D^3,\infty}=\sup_{\theta,\phi\in \mathbb{R}}|g(\theta,\phi)|.$
 Taking partial derivatives with respect to $\theta$ and $\phi$ of $g,$ 
we get the following expressions 
\begin{eqnarray*}
\frac{\partial g}{\partial\theta}(\theta,\phi)=2ie^{2i\theta}-2ie^{i\theta}-2ie^{i(\theta+\phi)}\\\frac{\partial g}{\partial\phi}(\theta,\phi)=2ie^{2i\phi}-2ie^{i\phi}-2ie^{i(\theta+\phi)}.
\end{eqnarray*}
Applying Lemma \ref{ML}, we obtain that, 
at the point of maximum of $|g|,$ 
the vectors 
$\frac{1}{2i}\frac{\partial g}{\partial\theta}$, $\frac{1}{2i}\frac{\partial g}{\partial\phi}$ and 
$g(\theta,\phi)-\frac{1}{2i}(\frac{\partial g}{\partial\theta}+\frac{\partial g}{\partial\phi})$ 
lie on a line passing through the origin in $\mathbb{R}^2.$
Note that 
\begin{eqnarray*}
g(\theta,\phi)-\frac{1}{2i}\Big(\frac{\partial g}{\partial\theta}+\frac{\partial g}{\partial\phi}\Big)=1-e^{i\theta}-e^{i\phi}\\
\frac{1}{2i}\frac{\partial g}{\partial\theta}-\frac{1}{2i}\frac{\partial g}{\partial\phi}=(e^{i\theta}-e^{i\phi})(e^{i\theta}+e^{i\phi}-1).
\end{eqnarray*}
Therefore, at the point of maximum, $1-e^{i\theta}-e^{i\phi}$ 
and $(e^{i\theta}-e^{i\phi})(e^{i\theta}+e^{i\phi}-1)$ 
lie on a line passing through the origin in $\mathbb{R}^2$. 
Since $1-e^{i\theta}-e^{i\phi}$ and $e^{i\theta}+e^{i\phi}-1$ 
are always collinear, 
one must have $1-e^{i\theta}-e^{i\phi}=0$ or 
$\arg(e^{i\theta}-e^{i\phi})\in\{0,\pi\}$. 
From this, it can be concluded that $\|p\|_{\D^3,\infty}=5.$ 

We would like to bring the attention of the reader to \cite{HJ} where the computation of the supremum norm of this polynomial has also been done.
  
\subsection{Extremal behaviour of \VK example:}
In this subsection, we show that the \VK example is extremal, 
in a certain sense,  
if we restrict ourselves to a class of symmetric   
$3\times 3$ matrices which includes the symmetric sign matrices. 
Sign matrices are the matrices of which 
each entry is either $1$ or $-1$. 
\VK example gives a lower bound for the quantity $C_2(\boldsymbol \delta_3)$
and using extreme point method, 
we establish an upper bound for the same. 
For this we need the following definitions.
\begin{defi}[Correlation Matrix]
A correlation matrix is a complex non-negative definite matrix 
whose all diagonal elements are equal to $1$. 
We denote the set of all $n\times n$ 
correlation matrices by $\mathscr{C}(n)$.
\end{defi} 

For every natural number $n,$ 
define the set $\boldsymbol{\delta}_n$ by
\[ \boldsymbol{\delta}_n=\big\{(T_{x_1},\ldots,T_{x_n}):x_j\in \ell^2_\mathbb{R} \mbox{ with }\|x_j\|\leq 1\mbox{ for }j=1,\ldots,n \big\}.\] 
Define 
$C_2(\boldsymbol{\delta}_n):=\sup\big\{ \|p(\boldsymbol T)\|:\boldsymbol T\in\boldsymbol\delta_n,\,p\in\mathcal{P}_2^s(n)\mbox{ with }\|p\|_{\mathbb{D}^n,\infty}\leq 1\big\}.$ 
For the rest of this section, we consider only the  
tuples of commuting and contractive Varopoulos operators in $\boldsymbol{\delta}_n$.
From the definition, it follows that 
$C_2(\boldsymbol{\delta}_n)\leq C_2(n).$  
We prove the following bound on $C_2(\boldsymbol \delta_3).$ 

\begin{rema}\label{RestrictedVaropoulos}
Given $x_1,\ldots,x_n\in \ell^2,$ 
we can define the vector 
$R_j=\big(\frac{\overline{x}_j+x_j}{2},i\frac{\overline{x}_j-x_j}{2}\big)$ 
for $j=1,\ldots, n.$ 
As noticed in Lemma \ref{UpperQuantity}, 
we know that 
$\langle R_j,R_k\rangle=\frac{\langle x_j, x_k\rangle + \langle x_k,x_j \rangle}{2}$. 
Hence for any degree two homogeneous polynomial 
$p(z_1,\ldots,z_n)=\sum_{j,k=1}^{n}a_{jk}z_jz_k$, 
where $(\!(a_{jk})\!)$ is a symmetric matrix, we get that 
\[\sup_{\boldsymbol T\in \boldsymbol\delta_n}\|p(\boldsymbol T)\|=\sup_{\|x_j\|_{\ell^2}\leq 1}\big|\sum_{j,k=1}^{n}a_{jk}\langle x_j,x_k\rangle\big|.\] 
\end{rema}
\begin{thm}\label{HS}
$1.2\leq C_2(\boldsymbol \delta_3)\leq\frac{3\sqrt{3}}{4}.$
\end{thm}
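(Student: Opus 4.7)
The plan is to split into the two bounds, handling the lower one via the explicit Varopoulos--Kaijser polynomial and the upper one via an extreme-point reduction on the elliptope $\mathcal{E}_3$ combined with the already-known bound on $\|A\|_{\infty\to 1}$.

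For the lower bound $C_2(\boldsymbol \delta_3) \geq 1.2$, I would invoke the Varopoulos--Kaijser polynomial $p(z_1,z_2,z_3) = z_1^2 + z_2^2 + z_3^2 - 2z_1z_2 - 2z_2z_3 - 2z_3z_1$, whose associated symmetric matrix is $A = 2I_3 - J_3$ and whose supremum norm on $\mathbb{D}^3$ equals $5$ (computed earlier in this section using Lemma \ref{ML}). Pick three unit vectors $x_1, x_2, x_3 \in \ell^2_\mathbb{R}$ at mutual angles of $120^\circ$, so that $\langle x_j, x_k\rangle = -\tfrac{1}{2}$ whenever $j \neq k$. The tuple $\boldsymbol T = (T_{x_1}, T_{x_2}, T_{x_3})$ lies in $\boldsymbol \delta_3$, and Remark \ref{RestrictedVaropoulos} gives
\[
\|p(\boldsymbol T)\| = \Big|\sum_{j,k=1}^3 a_{jk}\langle x_j, x_k\rangle\Big|.
\]
A direct expansion (equivalently, $\operatorname{tr}(AG)$ with $G = \tfrac{3}{2}I_3 - \tfrac{1}{2}J_3$) evaluates the right-hand side to $6$, so $\|p(\boldsymbol T)\|/\|p\|_{\mathbb{D}^3,\infty} = 6/5$.

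For the upper bound $C_2(\boldsymbol \delta_3) \leq \tfrac{3\sqrt{3}}{4}$, Remark \ref{RestrictedVaropoulos} rewrites $\|p_A(\boldsymbol T)\|$ as $|\operatorname{tr}(AG)|$, where $G = (\langle x_j, x_k\rangle)$ is a real $3\times 3$ positive semidefinite matrix with diagonal entries at most $1$. After rescaling to unit $x_j$, $G$ becomes a correlation matrix, i.e., a point of the elliptope $\mathcal{E}_3$. Since $G \mapsto \operatorname{tr}(AG)$ is linear, its supremum over the compact convex body $\mathcal{E}_3$ is attained at an extreme point. By Pataki's SDP rank count (the $n = 3$ diagonal constraints force the extreme rank $r$ to satisfy $r(r+1)/2 \leq 3$, hence $r \leq 2$), every extreme point of $\mathcal{E}_3$ has rank at most $2$, and any such $G$ can be parameterized as $G = \Re(zz^*)$ for some $z = (e^{i\theta_j})_{j=1}^{3} \in \mathbb{T}^3$, corresponding to the Gram matrix of the unit vectors $(\cos\theta_j, \sin\theta_j) \in \mathbb{R}^2$.

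For such a $G$, the symmetry of $A$ gives
\[
\operatorname{tr}(AG) = \tfrac{1}{2}\bigl(z^{*} A z + z^{T} A \bar z\bigr),
\]
and each summand has modulus at most $\|A\|_{\infty\to 1}$ since $z, \bar z \in \mathbb{T}^3$. Feeding in the bound $\|A\|_{\infty\to 1} \leq \tfrac{3\sqrt{3}}{4}\|p_A\|_{\mathbb{D}^3,\infty}$ from \cite{RG} (recalled at the start of Section \ref{Improvement}), I obtain $|\operatorname{tr}(AG)| \leq \tfrac{3\sqrt{3}}{4}\|p_A\|_{\mathbb{D}^3,\infty}$ for every extreme $G$, and hence for every $G \in \mathcal{E}_3$ by convex combination. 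The main obstacle I anticipate is securing the extreme-point structure of $\mathcal{E}_3$ (the rank-$\leq 2$ reduction together with the parameterization $G = \Re(zz^{*})$); once that is in place, the rest of the argument slots in cleanly using the known $\|A\|_{\infty\to 1}$ bound, and the decomposition $\operatorname{tr}(AG) = \tfrac{1}{2}(z^{*}Az + z^{T}A\bar z)$ handles real and complex symmetric $A$ uniformly.
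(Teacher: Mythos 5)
Your proposal is correct and reaches both bounds by the same overall strategy as the paper --- reduce $C_2(\boldsymbol\delta_3)$ to a linear optimization over correlation matrices, pass to extreme points, and finish with the bound $\|A_p\|_{\infty\to 1}\leq \tfrac{3\sqrt{3}}{4}\|p_{\!_{A_p}}\|_{\D^3,\infty}$ from \cite{RG} --- but the details differ in instructive ways. For the lower bound the paper simply cites \cite{HJ}, while you re-derive $6/5$ from the Varopoulos--Kaijser polynomial evaluated on Varopoulos operators built from three unit vectors at mutual angle $120^\circ$; this is exactly the example behind the citation, so nothing is lost and the writeup becomes self-contained. For the upper bound the paper works with \emph{complex} correlation matrices, whose extreme points for $n=3$ have rank one by \cite{LCB}, so it gets $\beta(A_p)=\sup_{|z_i|=1}\big|\sum a_{ij}z_i\bar z_j\big|\leq\|A_p\|_{\infty\to 1}$ in a single step; you instead stay with the \emph{real} elliptope, where the Pataki count only yields rank $\leq 2$, and compensate with the identity $\operatorname{tr}(AG)=\tfrac12\big(z^*Az+z^TA\bar z\big)$, each summand bounded by $\|A\|_{\infty\to 1}$. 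Your route thus needs only the standard real rank bound at the cost of this extra decomposition, whereas the paper's complex route buys rank-one extremes and a shorter final estimate; both are valid. The one soft spot you share with the paper is the passage from Gram matrices of vectors with $\|x_j\|\leq 1$ (the definition of $\boldsymbol\delta_3$) to genuine correlation matrices: your ``rescaling to unit $x_j$'' is precisely the normalization the paper performs implicitly when it equates $C_2(\boldsymbol\delta_n)$ with a supremum over unit vectors. In fact your argument can absorb this step outright: the extreme points of $\{G\succeq 0: G_{jj}\leq 1\}$ in size $3$ also have rank at most $2$, and the same decomposition with $w_j=r_je^{i\theta_j}$ in the closed polydisc still gives $|\operatorname{tr}(AG)|\leq\|A\|_{\infty\to 1}$, so no normalization is actually needed.
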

\begin{proof} 
The fact that $C_2(\boldsymbol \delta_3)\geq 1.2$ follows from \cite{HJ}.

Given a complex $n\times n$ matrix $A,$ we define the following quantity 
$$\beta(A):=\sup_{B\in\mathscr{C}(n)}|\langle A,B\rangle|.$$ 
Every correlation matrix $B$ can be written as 
$(\!(\langle x_i,x_j\rangle)\!)$ for some unit vectors 
$x_i,$ $1\leq i\leq n,$ and vice versa. 
Let $U$ denote the unit ball of $\mathbb{C}^s[Z_1,\ldots, Z_n]$ 
with respect to supremum norm over the polydisc $\mathbb{D}^n.$ 
Suppose $A_p$ denote the symmetric matrix 
corresponding to $p\in \mathbb{C}^s[Z_1,\ldots, Z_n]$.
Then, using Remark \ref{RestrictedVaropoulos}, we get 
\begin{eqnarray*}
\sup_{p\in U}\beta(A_p)&=&\sup_{p\in U}\sup_{\|x_i\|_{\ell^2}=1}\big|\sum_{i,j=1}^na_{ij}\langle x_i,x_j \rangle\big|\\
&=& C_2(\boldsymbol \delta_n).
\end{eqnarray*}
The map $B\mapsto \langle A,B\rangle$ is linear in $B$ 
and $\mathscr{C}(n)$ is a compact convex set, we conclude that 
\begin{eqnarray}\label{LinearInB}
\beta(A_p)=\sup_{B\in E(\mathscr{C}(n))}|\langle A_p,B\rangle|,
\end{eqnarray}
where $E(\mathscr{C}(n))$ is the set of all extreme points of $\mathscr{C}(n)$.
Since all the elements of $E(\mathscr{C}(n))$ 
have rank less than or equal to $\sqrt{n}$ (\cite{LCB}) 
therefore when $n=3,$ 
we conclude that the extreme correlation matrices have rank one. 
If the correlation matrix 
$(\!(\langle x_i,x_j\rangle)\!)$ is of rank $1$, 
then $\bigvee\{x_i;1\leq i \leq n\}$ is one dimensional. 
Using Equation \eqref{LinearInB} and \cite{RG}, 
for $n=3$, we obtain the following 
\begin{eqnarray*}
\beta(A_p)&=&\sup_{\mid z_i\mid=1}\Big|\sum_{i,j=1}^{n}a_{ij}z_i\bar{z}_{j}\Big|\\
&\leq &\|A_p\|_{\infty\to 1}\\
&\leq &\frac{3\sqrt{3}}{4}\sup_{\mid z_i\mid=1}\Big|\sum_{i,j=1}^{n}a_{ij}z_i{z}_{j}\Big|.
\end{eqnarray*}
This completes the proof of the theorem.
\end{proof} 

\begin{rema} In an unpublished work, Holbrook and Schoch have  shown that $C_2(\boldsymbol \delta_3)\geq 1.2323$. Their method rely on explicit construction of a two degree homogeneous polynomial as in \VK (replacing the coefficients $-2$ in the \VK polynomial by something like $-2.5959$). In view of this and Theorem \ref{HS}, we have $1.2323\leq C_2(\boldsymbol \delta_3)\leq\frac{3\sqrt{3}}{4}\approx 1.2990.$
\end{rema}
The following table shows that \VK polynomial is extremal 
among the set of all symmetric sign matrices of order $3$ 
as long as the ratio 
$\|p(T_1,T_2,T_3)\|/\|p\|_{\mathbb{D}^3,\infty}$ is concerned, 
where $T_1,T_2$ and $T_3$ are commuting Varopoulos operators. 
The total number of symmetric sign matrices of order $3$ is $2^6.$ 
To compute $\|p\|_{\mathbb{D}^3,\infty}$ and $\|p(T_1,T_2,T_3)\|,$ 
without loss of generality, we can assume that 
every entry in first row and first column of $A_p,$ 
symmetric matrix corresponding to $p,$ is one. 
Since $\|p\|_{\mathbb{D}^3,\infty}$ and $\|p(T_1,T_2,T_3)\|$ 
are invariant under $SA_pS^{-1},$ 
for every permutation matrix $S$ of order $3,$ 
therefore it leaves us with the following $6$ inequivalent matrices, 
for which, we use Lemma \ref{ML} to compute the supremum norm of the polynomials.

\begin{center}

\begin{tabular}{|c|c|c|}
    \hline
    $ A_p$ & $\|p\|_{\mathbb{D}^3,\infty}$ & $\|p(T_1,T_2,T_3)\|$\\
    \hline
    $\left(
\begin{array}{rrr}
1 & 1 & 1\\
1 & -1 & 1\\
1 & 1 & -1\\
\end{array}
\right)
$ & $5$ &  $5$\\
\hline
    $\left(
\begin{array}{rrr}
1 & 1 & 1\\
1 & 1 & -1\\
1 & -1 & 1\\
\end{array}
\right)
$ & $5$ &  $6$\\
\hline
   $\left(
\begin{array}{rrr}
1 & \ \ 1 & 1\\
1 & \ \ 1 & 1\\
1 & \ \ 1 & -1\\
\end{array}
\right)
$ & $5\sqrt{2}$ &  $7$\\
\hline
$\left(
\begin{array}{rrr}
1 & 1 & 1\\
1 & 1 & -1\\
1 & -1 & -1\\
\end{array}
\right)
$ & $7$ &  $5$\\
\hline
$\left(
\begin{array}{rrr}
1 & 1 & 1\\
1 & -1 & -1\\
1 & -1 & -1\\
\end{array}
\right)
$ & $\sqrt{41}$ &  $4$\\
\hline
$\left(
\begin{array}{rrr}
1 & \ \ 1 & \ \ 1\\
1 & \ \ 1 & \ \ 1\\
1 & \ \ 1 & \ \ 1\\
\end{array}
\right)
$ & $9$ &  $9$\\
    \hline
  \end{tabular}
\end{center}
We show that \VK polynomial is also extremal among the following matrices 
\[\mathcal{S}:=\Big\{\mathscr{B}_\alpha=\left(
\begin{smallmatrix}
1 & 1 & 1\\
1 & 1 & \alpha\\
1 & \alpha & 1\\
\end{smallmatrix}
\right):\alpha\in \mathbb{R}\Big\}.\]  
For $\alpha\geq 0,$ the ratio 
$\|p(T_1,T_2,T_3)\|/\|p\|_{\mathbb{D}^3,\infty}$ 
is always $1.$ Hence we consider the case when $\alpha<0.$ 
Explicit computation shows that, 
for every symmetric matrix $\mathscr{B}_\alpha$ in $\mathcal{S},$ 
the corresponding homogeneous polynomial $p_{\!_{\mathscr{B}_\alpha}}$ of degree two 
satisfies the following 
$$\|p_{\!_{\mathscr{B}_\alpha}}\|_{{\D^3,\infty}}=\sup_{\theta,\phi\in \mathbb{R}}\big\{|1+e^{i\theta}+e^{i\phi}+e^{i\theta}(1+e^{i\theta}+\alpha e^{i\theta})+e^{i\phi}(1+e^{i\phi}+\alpha e^{i\theta})|\big\}.$$
Suppose 
$f(\theta,\phi)=|1+e^{i\theta}+e^{i\phi}+e^{i\theta}(1+e^{i\theta}+\alpha e^{i\theta})+e^{i\phi}(1+e^{i\phi}+\alpha e^{i\theta})|.$
Using Lemma \ref{ML}, 
at point of maximum of $f$, we get that 
$1+e^{i\theta}+e^{i\phi}, \ e^{i\theta}(1+e^{i\theta}+\alpha e^{i\theta})\ \text{and}\  e^{i\phi}(1+e^{i\phi}+\alpha e^{i\theta})\ \text{are collinear}.$ 
Subtracting the third vector from the second vector, we get 
$1+e^{i\theta}+e^{i\phi},(e^{i\theta}-e^{i\phi})(1+e^{i\theta}+e^{i\phi})\ \text{are collinear}.$
We break the computation essentially into the following two cases

\textbf{Case 1:}\label{Case1} If $1+e^{i\theta}+e^{i\phi}$ is zero then the maximum of $f$ is $2-2\alpha.$

\textbf{Case 2:} Suppose $1+e^{i\theta}+e^{i\phi}\neq 0.$ 
Then $1$ and $e^{i\theta}-e^{i\phi}$ are collinear 
and therefore at a point of maximum of $f,$ 
we get that $\theta=\phi$ or $\theta=\phi+\pi.$ We deal with this case in the form of following two subcases.
\begin{itemize}
\item[1.] When $\theta=\phi$ then we need to maximize 
$f(\theta,\theta)=|1+4e^{i\theta}+2(1+\alpha)e^{2i\theta}|$ 
over $\theta\in \mathbb{R}.$ 
We observe that 
$f(\theta,\theta)=(17+4(1+\alpha)^2+8(3+2\alpha)cos\theta +4(1+\alpha)cos 2\theta)^{1/2}.$ 
At critical point $\theta_0$ of $f,$ we get that 
\[cos \theta_0=-\frac{3+2\alpha}{2(1+\alpha)}\mbox{ or }sin\theta_0=0.\]
If $\alpha> -5/4$ then the only possibility is $sin\theta_0=0$ 
i.e. $e^{i\theta_0}=\pm 1.$
In this case, maximum of $f$ is either 
$|7+2\alpha|$ or $1-2\alpha.$ 
As case 1 suggests, the quantity $1-2\alpha$ can not be the maximum.
In this subcase if $\alpha$ is at most $-5/4$ then at 
$cos \theta_0=-\frac{3+2\alpha}{2(1+\alpha)},$ 
$$f(\theta_0,\theta_0)=\big(17+4(1+\alpha)^2-2\frac{(3+2\alpha)^2}{\!\!\!1+\alpha}-4(1+\alpha)\big)^{1/2}.$$  
\item[2.] When $\theta=\phi+\pi$ then maximum of $f$ is $3-2\alpha.$ 
This subcase proves the redundancy of case 1 as far as the maximum of $f$ is concerned.
\end{itemize}
 
Comparison of all the possible cases 
and explicit computation tells us that 
for $\alpha<0,$ the norm of the homogeneous polynomial 
$p_{\!_{\mathscr{B}_\alpha}}$ of degree two, 
is the following continuous function 
\[\|p_{\!_{\mathscr{B}_\alpha}}\|_{\D^3,\infty}=
\begin{cases}
7+2\alpha, & \alpha>-1\\
3-2\alpha, & \alpha\leq -1\\
\end{cases}
\]
We define the following quantity 
$M_{\mathscr{B}_\alpha}=\sup_{|z_j|=1}\big|\sum a_{jk}z_j\overline{z}_k\big|.$ 
By Remark \ref{RestrictedVaropoulos} and from \cite{LCB}, 
we see that 
$M_{\mathscr{B}_\alpha}=\sup_{\boldsymbol T\in \boldsymbol\delta_3}\|p_{\!_{\mathscr{B}_\alpha}}(\boldsymbol T)\|.$ 
Corresponding to every matrix $\mathscr{B}_\alpha$ in the class $\mathcal{S},$ we get 
\begin{eqnarray*}
M_{\mathscr{B}_\alpha}&=&\sup_{|z_j|=1}\big|3+2(\Re z_1\overline{z}_2+\alpha\Re z_2\overline{z}_3+\Re z_3\overline{z}_1)\big|\\
&=& \sup_{\theta,\phi\in\mathbb{R}}\big|3+2(cos\theta + cos\phi + \alpha cos(\theta-\phi))\big|
\end{eqnarray*}
Define the function $h(\theta,\phi)=3+2(cos\theta + cos\phi + \alpha cos(\theta-\phi)).$ 
Then the critical points of the function $h$ 
are the solutions of $sin\theta +\alpha sin(\theta-\phi)=0$ and 
$sin\phi -\alpha sin(\theta-\phi)=0.$  
Therefore the critical points $(\theta_0,\phi_0)$ 
satisfy $sin\theta_0 + \sin\phi_0=0.$ 
Thus, we get that $\theta_0=-\phi_0\mbox{ or }\theta_0=-\phi_0+\pi \mbox{ or }\theta_0=\phi_0+\pi.$ 

\textbf{Case 1:} If $\theta_0=-\phi_0$ then 
\begin{eqnarray*}
h(\theta_0,\phi_0)&=&3+2(2cos \theta_0 + \alpha cos 2\theta_0)\\
&=&4\alpha cos^2\theta_0 + 4cos \theta_0 +3-2\alpha\\
&=& g(t),\mbox{ say,}
\end{eqnarray*}
where $t=cos \theta_0.$

If $\alpha> -1/2$ then there is no critical point of $g$ in $(0,1).$ 
Hence the maximum is $3-2\alpha$ or $|7+2\alpha|.$ 
Between these, in the case when $\alpha\in (-1/2,0),$ clearly, $3-2\alpha$ is bigger. 
If $\alpha\leq -1/2$ then the maximum of $g(t)$ is among 
$3-2\alpha-1/\alpha$ or $|7+2\alpha|$ or $3-2\alpha.$ 
A straightforward computation shows that 
$3-2\alpha-1/\alpha$ is bigger than the other two values. 
Therefore 
we have the following 

\[M_{\mathscr{B}_\alpha}=
\begin{cases}
3-2\alpha, & \alpha> -\frac{1}{2}\\
3-2\alpha-1/\alpha, & \alpha\leq -\frac{1}{2}.\\
\end{cases}
\] 
Computations done till now leads to the following graph of the ratio $\mathcal{Q}:=$ $\frac{M_{\mathscr{B}_\alpha}}{\|p_{\!_{\mathscr{B}_\alpha}}\|_{\D^3,\infty}}$
\[\mathcal{Q}=
\begin{cases}
\frac{3-2\alpha}{7+2\alpha}, & \alpha \in (-1/2,0)\\
\frac{3-2\alpha-1/\alpha}{7+2\alpha}, & \alpha \in [-1,-1/2]\\
\frac{3-2\alpha-1/\alpha}{3-2\alpha}, & \alpha \in (-\infty,-1).
\end{cases}
\]
The ratio 
$\mathcal{Q}$ 
is increasing in $(-\infty,-1)$ and decreasing in $(-1,0).$ 
Thus maximum of the ratio 
$\mathcal{Q}$ 
is at $-1$ and hence \VK polynomial is the best in the class we have specified.

\textbf{Acknowledgement:} We are very grateful to Prof. Gadadhar Misra and Prof. Gilles Pisier for several fruitful discussions and suggestions. The authors also express their sincere gratitude to Prof. Sameer Chavan and Prof. Parasar Mohanty for their constant support. We also thank the referee and the editor for several constructive suggestions which significantly improved the presentation of the paper.

\bibliographystyle{amsalpha}\bibliography{On_A_Question_Of_Varopoulos_Final_Version}
  
\end{document}